\newcommand{\ccc}{{\mathbf C}}
\newcommand{\nnn}{{\mathbf N}}
\newcommand{\zzz}{{\mathbf Z}}
\renewcommand{\ggg}{{\frak{g}}}
\newcommand{\hhh}{{\frak{h}}}
\newtheorem{thm}{Theorem}[section]
\newtheorem{prop}{Proposition}[section]
\newtheorem{lemma}{Lemma}[section]
\newtheorem{cor}{Corollary}[section]
\newtheorem{rem}{Remark}[section]
\newtheorem{note}{Note}[section]
\numberwithin{equation}{section}
\begin{document}

\title{Vanishing of the quantum reduction of the Deligne 
exceptional series representations of negative integer level}

\author{\footnote{12-4 Karato-Rokkoudai, Kita-ku, Kobe 651-1334, 
Japan, \qquad
wakimoto.minoru.314@m.kyushu-u.ac.jp, \hspace{5mm}
wakimoto@r6.dion.ne.jp 
}{ Minoru Wakimoto}}

\date{\empty}

\maketitle

\begin{center}
Abstract
\end{center}

In this paper we show that, for the Deligne exceptional series 
representations of negative integer level of affine Lie algebras,
the quantum Hamiltonian reduction vanishes except for the cases
where the nilpotent element is conjugate to $e_{-\theta}$ or 
$e_{-\theta_s}$.

\tableofcontents

\section{Introduction}

In this paper we study the vanishing of the space obtained by 
the quantum Hamiltonian reduction of the Deligne exceptional series
representations of negative integer level, 
as a continuation from \cite{KW2024}.
Let $\ggg$ be the non-twisted affinization of a finite-dimensional simple 
Lie algebra $\overline{\ggg}$ with a Cartan subalgebra $\overline{\hhh}$.
Let $\overline{\Delta}$ (resp. $\overline{\Delta}^+$) be the set of all 
(resp. positive all) roots and $\overline{W}$ be the Weyl group of 
$(\overline{\ggg}, \overline{\hhh})$. Let $( \,\ | \,\ )$ be the standard 
bilinear form defined in \cite{K1}, and $\overline{\Delta}_{\ell}$ (resp. 
$\overline{\Delta}_s$) be the set of long (resp. short) roots.
Let $\theta$ (resp. $\theta_s$) denote the highest root (resp. 
highest short root).
For a root $\alpha \in \overline{\Delta}$, let $\overline{\ggg}_{\alpha}$
(resp. $e_{\alpha}$) denote the root space (resp. root vector) of $\alpha$.
The coroot of $\alpha \in \overline{\Delta}$ is defined by
$\alpha^{\vee} := \dfrac{2\alpha}{(\alpha|\alpha)}$, and 
let $\overline{Q}^{\vee} := \sum_{\alpha \in \overline{\Delta}} 
\zzz \alpha^{\vee}$ be the coroot lattice. 
For the basic notations such as $\delta$ and $\Lambda_0$ and so on, 
we follow from \cite{K1}.  The Cartan subalgebra 
$\hhh := \overline{\hhh}+\ccc \Lambda_0+\ccc \delta$ of $\ggg$ 
is identified with its dual space $\hhh^{\ast}$ by the 
non-degenerate inner product $( \,\ | \,\ )$. Define the coordinates 
in $\hhh$ by
$$
(\tau, z, t) \, := \, 2\pi i(-\tau \Lambda_0+z+t\delta)
$$
where $\tau \in \ccc_+ := \{\tau \in \ccc \,\ ; \,\ {\rm Im}(\tau)>0\}$
and $z \in \overline{\hhh}$ and $t \in \ccc$. For an element 
$\lambda \in \hhh$, let $\overline{\lambda}$ denote its 
$\overline{\hhh}$-component. We put $q:=e^{2\pi i\tau}$ as usual.

\medskip

For $\lambda \in \hhh^{\ast}$ and $\alpha \in \overline{\Delta}_+$,
we put 
\begin{subequations}
{\allowdisplaybreaks
\begin{eqnarray}
F^{[\alpha]}_{\lambda+\rho} &:=& 
\frac12 \, q^{\frac{|\lambda+\rho|^2}{2(K+h^{\vee})}}
\sum_{\gamma \in \overline{Q}^{\vee}}
(\alpha|\gamma) \, t_{\gamma}(e^{\lambda+\rho})
\nonumber
\\[1mm]
&=&
\frac12 \sum_{\gamma \in \overline{Q}^{\vee}}
(\alpha|\gamma) \, e^{\overline{\lambda+\rho}+(K+h^{\vee})\gamma} \ 
q^{\frac{1}{2(K+h^{\vee})}|\overline{\lambda+\rho}+(K+h^{\vee})\gamma|^2}
\label{eqn:2024-703a1}
\\[2mm]
A^{[\alpha]}_{\lambda+\rho} &:=& \sum_{w \in \overline{W}}
\varepsilon(w) \, w(F^{[\alpha]}_{\lambda+\rho})
\label{eqn:2024-703a2}
\end{eqnarray}}
\end{subequations}
where $\rho$ is the Weyl vector of $\ggg$ and 
$h^{\vee} = (\rho| \delta)$ is the dual Coxeter number, and 
$t_{\gamma}$ is the linear transformation on $\hhh$ 
defined by the formula (6.5.2) in \cite{K1}. 

For Deligne exceptional series representations, 
in the case when $\lambda$ satisfies the conditions 
(i) $\sim$ (iv) in Theorem 4.1 in \cite{KW2017d}, 
the numerator of the character of the irreducible 
$\ggg$-module $L(\lambda)$ is given by the above 
$A^{[\alpha]}_{\lambda+\rho}$.
We consider the quantum Hamiltonian reduction of these 
modules. Given an $sl_2$-triple $(x,e,f)$ with $[x,e]=e, \, 
[x,f]=-f, \, [e,f]=x$, the numerator of the character of 
the $W(\ggg,x,f)_K$-module obtained from the quantum reduction 
of $\ggg$-module $L(\lambda)$ is 
$A^{[\alpha]}_{\lambda+\rho}
(\tau, \, -\tau x+H, \, \frac{\tau}{2} \, |x|^2)$. 
In \cite{KW2024}, the following theorem is proved:

\medskip



\begin{thm} {\rm \cite{KW2024}} \,\ 
\label{vanish:thm:2024-704a}
Assume that $(\lambda, \alpha)$ is an element in 
$\hhh^{\ast} \times \overline\Delta_+$ satisfying the condition 
\begin{equation}\left\{
\begin{array}{lcccc}
(\lambda+\rho| \, \delta) &= & K+h^{\vee} &\in & \nnn \\[1mm]
(\lambda+\rho| \, \delta-\alpha) &=& 0 & &
\end{array}\right.
\label{vanish:eqn:2024-703b}
\end{equation}
and $(x,e,f)$ is an $sl_2$-triple. 
Choose $\beta \in \overline{\Delta}$ such that 
\begin{equation} \hspace{-32.5mm}
\left\{
\begin{array}{lcc}
(\beta \, | \, x) &\in & \zzz \\[1mm]
(\beta \, | \, \overline{\hhh}^f) &=& \{0\}
\end{array}\right.
\label{vanish:eqn:2024-703c}
\end{equation}

\noindent
Then, 
\begin{enumerate}
\item[{\rm 1)}] \,\ the following formula holds for $H \in \overline{\hhh}^f$:
\begin{equation}
A^{[\alpha]}_{\lambda+\rho}
\Big(\tau, \,\ -\tau x+H, \,\ \frac{\tau}{2} \, |x|^2\Big)
\,\ = \,\ 
\frac14 \, (\beta |x) \sum_{w \in \overline{W}}
\varepsilon(w) \, (\beta^{\vee}| w\alpha) \, 
f_{\lambda,x,w}(\tau,H)
\label{vanish:eqn:2024-703e1}
\end{equation}

\vspace{-3mm}

\noindent
where

\vspace{-6mm}

\begin{equation}
f_{\lambda,x,w}(\tau,H) \, := 
\sum_{\gamma \in \overline{Q}^{\vee}}
e^{2\pi i(\overline{\lambda+\rho}+(K+h^{\vee})\gamma \, | \, w^{-1}H)} \, 
q^{\frac{1}{2(K+h^{\vee})}|\overline{\lambda+\rho}
+(K+h^{\vee})\gamma-(K+h^{\vee})w^{-1}x|^2}
\label{vanish:eqn:2024-703e2}
\end{equation}

\item[{\rm 2)}] \,\ the RHS of \eqref{vanish:eqn:2024-703e1} does not depend
on the choice of $\beta$ satisfying the condition \eqref{vanish:eqn:2024-703c}.
\end{enumerate}
\end{thm}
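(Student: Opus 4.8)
**Proof proposal for Theorem (the stated result on $A^{[\alpha]}_{\lambda+\rho}$ specialized at $(\tau, -\tau x + H, \frac{\tau}{2}|x|^2)$).**

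The plan is to unwind the definitions \eqref{eqn:2024-703a1}--\eqref{eqn:2024-703a2} and perform the specialization $z \mapsto -\tau x + H$, $t \mapsto \frac{\tau}{2}|x|^2$ directly, exploiting the fact that the condition \eqref{vanish:eqn:2024-703b} makes the theta-like function $F^{[\alpha]}_{\lambda+\rho}$ degenerate. First I would substitute the coordinates $(\tau, z, t)$ into the exponential $e^{\overline{\lambda+\rho}+(K+h^{\vee})\gamma}\, q^{\frac{1}{2(K+h^{\vee})}|\overline{\lambda+\rho}+(K+h^{\vee})\gamma|^2}$ appearing in \eqref{eqn:2024-703a1}: evaluating a formal exponential $e^\mu$ at $2\pi i(-\tau\Lambda_0 + (-\tau x + H) + \frac{\tau}{2}|x|^2\delta)$ produces $q^{-(\mu|x) + \cdots}\, e^{2\pi i(\overline{\mu}|H)}$ up to the level-dependent pieces, and combining this $q$-power with the quadratic $q$-exponent completes the square to give exactly the exponent $\frac{1}{2(K+h^{\vee})}|\overline{\lambda+\rho}+(K+h^{\vee})\gamma - (K+h^{\vee})x|^2$ that appears (after applying $w^{-1}$) in \eqref{vanish:eqn:2024-703e2}. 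The linear factor $\frac12(\alpha|\gamma)$ survives this substitution, so after applying $w$ and summing over $\overline{W}$ as in \eqref{eqn:2024-703a2} one obtains
\[
A^{[\alpha]}_{\lambda+\rho}\Big(\tau,-\tau x+H,\tfrac{\tau}{2}|x|^2\Big)
= \frac12 \sum_{w\in\overline{W}}\varepsilon(w)\sum_{\gamma\in\overline{Q}^\vee}
(w\alpha\,|\,\gamma)\, e^{2\pi i(\overline{\lambda+\rho}+(K+h^{\vee})\gamma\,|\,w^{-1}H)}\,
q^{\frac{1}{2(K+h^{\vee})}|\overline{\lambda+\rho}+(K+h^{\vee})\gamma-(K+h^{\vee})w^{-1}x|^2},
\]
i.e.\ $\frac12\sum_w \varepsilon(w)(w\alpha\,|\,\gamma)\,f_{\lambda,x,w}$ with the $\gamma$-sum absorbed into $f$ only if $(w\alpha|\gamma)$ were $\gamma$-independent — which it is not, so the factor $(w\alpha|\gamma)$ must still be dealt with.

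The key trick is then to replace the offending linear factor $(w\alpha|\gamma)$ by something constant in $\gamma$, using the $\beta$ chosen in \eqref{vanish:eqn:2024-703c}. Since $(\beta|\overline{\hhh}^f)=0$ we have $(\beta|H)=0$ for $H\in\overline{\hhh}^f$, and since $(\beta|x)\in\zzz$ the shift $\gamma\mapsto\gamma+\frac{1}{(K+h^{\vee})}\,(\text{something})$ — more precisely, I want to compare the summand at $\gamma$ with the one obtained by a lattice translation of $\gamma$ chosen so that the $q$-exponent is invariant. Concretely, using \eqref{vanish:eqn:2024-703b} in the form $(\lambda+\rho|\delta)=K+h^{\vee}$ and $(\overline{\lambda+\rho}|\alpha^\vee)\in\zzz$ forced by \eqref{vanish:eqn:2024-703b}, one shows that translating $\gamma$ by $w\beta^\vee$ fixes the $q$-power (the quadratic form is $\overline{Q}^\vee$-translation invariant up to terms killed by \eqref{vanish:eqn:2024-703b}) and multiplies the $e^{2\pi i(\cdots|w^{-1}H)}$ factor by $e^{2\pi i(K+h^\vee)(w\beta^\vee|w^{-1}H)} = e^{2\pi i (K+h^\vee)(\beta^\vee|H)}=1$ since $\beta^\vee\parallel\beta\perp\overline{\hhh}^f$; under this same translation $(w\alpha|\gamma)\mapsto (w\alpha|\gamma)+(w\alpha|w\beta^\vee)=(w\alpha|\gamma)+(\alpha|\beta^\vee)$. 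Averaging the summand over the two shifts replaces $(w\alpha|\gamma)$ by $(w\alpha|\gamma)+\frac12(\alpha|\beta^\vee)$; iterating/telescoping over the whole $\zzz\beta^\vee$-coset shows the $\gamma$-dependent part of $(w\alpha|\gamma)$ along $w\beta^\vee$ contributes nothing after symmetrization, leaving an effective factor proportional to $(\alpha|\beta^\vee)=(\beta^\vee|w\alpha)$ times the overall constant $(\beta|x)$ that records the "width" of the coset in the $x$-direction (this is where $(\beta|x)\in\zzz$ enters, producing the rational constant $\frac14(\beta|x)$). This reorganizes the double sum into $\frac14(\beta|x)\sum_w\varepsilon(w)(\beta^\vee|w\alpha)\,f_{\lambda,x,w}(\tau,H)$, which is \eqref{vanish:eqn:2024-703e1}.

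For part 2), independence of the choice of $\beta$, I would argue that any two admissible $\beta$'s differ by an element $\beta'$ with $(\beta'|x)=0$ and $(\beta'|\overline{\hhh}^f)=0$, hence $\beta'$ lies in a subspace on which the relevant pairing vanishes; by linearity in $\beta$ of both sides (the LHS does not involve $\beta$ at all, so it suffices to show the RHS is unchanged), the difference of the two RHS expressions is $\frac14(\beta'|x)\sum_w\varepsilon(w)(\beta'^\vee|w\alpha)f_{\lambda,x,w}=0$ because $(\beta'|x)=0$ — modulo checking that $\beta'$ itself can be taken to be a root so that $\beta'^\vee$ makes sense, or else rerunning the symmetrization argument additively in $\beta$. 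The main obstacle I anticipate is the bookkeeping in the symmetrization step: making precise which sublattice of $\overline{Q}^\vee$ the vector $w\beta^\vee$ generates relative to the $x$-direction, verifying that \eqref{vanish:eqn:2024-703b} exactly cancels the cross terms in the quadratic form under the $\gamma\mapsto\gamma+w\beta^\vee$ shift, and getting the constant $\frac14$ (as opposed to $\frac12$ or $\frac18$) correct — this is a finite but delicate computation with the standard form $(\,\cdot\,|\,\cdot\,)$ and the identities $\theta^\vee = \theta$ for long roots that underlie the Deligne-series hypotheses.
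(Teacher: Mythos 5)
You should first be aware that the paper contains no proof of this theorem: it is quoted from \cite{KW2024} (``in preparation''), so there is no internal argument to compare yours against. Judged on its own terms, your opening step is correct: evaluating $e^{\overline{\lambda+\rho}+(K+h^{\vee})\gamma}$ at $z=-\tau x+H$, picking up the level contribution from $t=\frac{\tau}{2}|x|^2$, and completing the square does yield the exponent $\frac{1}{2(K+h^{\vee})}|\overline{\lambda+\rho}+(K+h^{\vee})\gamma-(K+h^{\vee})w^{-1}x|^2$, reducing the problem to a double sum over $\overline{W}\times\overline{Q}^{\vee}$ still carrying the linear factor $(\alpha|\gamma)$.

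The central step, however, rests on a false premise. You assert that translating $\gamma\mapsto\gamma+w\beta^{\vee}$ ``fixes the $q$-power'' up to terms killed by \eqref{vanish:eqn:2024-703b}. It does not: under $\gamma\mapsto\gamma+\mu$ the exponent changes by $(\overline{\lambda+\rho}+(K+h^{\vee})\gamma-(K+h^{\vee})w^{-1}x\,|\,\mu)+\frac{K+h^{\vee}}{2}|\mu|^2$, which is $\gamma$-dependent; a lattice translation never permutes the terms of a theta series trivially. (Compare the proof of Lemma \ref{vanish:lemma:2024-706a}(2), where the shift $\gamma'=\gamma+w^{-1}\beta'{}^{\vee}$ is used precisely to compensate the simultaneous replacement $x\mapsto x-\beta'{}^{\vee}$, not on its own.) Consequently the ``averaging/telescoping over the $\zzz\beta^{\vee}$-coset'' has no content and the constant $\frac14(\beta|x)$ is asserted rather than derived. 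The hypothesis $(\lambda+\rho|\delta-\alpha)=0$ must enter quantitatively --- it gives $(\alpha|\overline{\lambda+\rho})=K+h^{\vee}$, which is what permits rewriting the linear factor $(w\alpha|\gamma)$ in terms of the full orbit point $w(\overline{\lambda+\rho})+(K+h^{\vee})\gamma$ before any symmetry argument can be applied --- but in your write-up it is only invoked rhetorically. Part 2) is likewise unsupported: two admissible roots $\beta_1,\beta_2$ need not satisfy $(\beta_1-\beta_2|x)=0$ (the condition is $(\beta_i|x)\in\zzz$, not equality of those integers), their difference need not be a root, and the right-hand side of \eqref{vanish:eqn:2024-703e1} is not linear in $\beta$ because $\beta^{\vee}=2\beta/|\beta|^2$, so the ``difference of the two RHS expressions'' does not reduce to the case $(\beta'|x)=0$. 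As it stands the proposal establishes only the preliminary specialization identity, not the theorem.
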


\medskip

We note that the RHS of \eqref{vanish:eqn:2024-703e2} can be 
written as follows since $(x|H)=0$:
\begin{equation}
f_{\lambda,x,w}(\tau,H) = \hspace{-1mm}
\sum_{\gamma \in \overline{Q}^{\vee}} \hspace{-1mm}
e^{2\pi i(\overline{\lambda+\rho}+(K+h^{\vee})\gamma -(K+h^{\vee})w^{-1}x
\, | \, w^{-1}H)} \, 
q^{\frac{1}{2(K+h^{\vee})}|\overline{\lambda+\rho}
+(K+h^{\vee})\gamma-(K+h^{\vee})w^{-1}x|^2}
\label{vanish:eqn:2024-703e3}
\end{equation}

In \cite{KW2024}, the modular transformation properties of 
$A^{[\alpha]}_{\lambda+\rho}$ and the functions 
%
\begin{subequations}
{\allowdisplaybreaks
\begin{eqnarray}
A^{[\alpha] \, (-)}_{\lambda+\rho}
\Big(\tau, \,\ -\tau x+H, \,\ \frac{\tau}{2} \, |x|^2\Big)
&:=& 
\frac14 \, (\beta |x) \sum_{w \in \overline{W}}
\varepsilon(w) \, (\beta^{\vee}| w\alpha) \, 
f_{\lambda,x,w}^{(-)}(\tau,H)
\label{vanish:eqn:2024-728a1}
\\[2mm]
A^{[\alpha] \, (\ast)}_{\lambda+\rho}
\Big(\tau, \,\ -\tau x+H, \,\ \frac{\tau}{2} \, |x|^2\Big)
&:=& 
\frac14 \, (\beta |x) \sum_{w \in \overline{W}}
\varepsilon(w) \, (\beta^{\vee}| w\alpha) \, 
f_{\lambda,x,w}^{(\ast)}(\tau,H)
\label{vanish:eqn:2024-728a2}
\end{eqnarray}}
\end{subequations}
are studied, where
\begin{subequations}
{\allowdisplaybreaks
\begin{eqnarray}
f_{\lambda,x,w}^{(-)}(\tau,H) &:=&
\sum_{\gamma \in \overline{Q}^{\vee}} \hspace{-1mm}
e^{2\pi i(\overline{\lambda+\rho}+(K+h^{\vee})\gamma -(K+h^{\vee})w^{-1}x
\, | \, w^{-1}(H+x))} 
\nonumber
\\[2mm]
& &
\times \,\ 
q^{\frac{1}{2(K+h^{\vee})}|\overline{\lambda+\rho}
+(K+h^{\vee})\gamma-(K+h^{\vee})w^{-1}x|^2}
\label{vanish:eqn:2024-728b1}
\\[2mm]
f_{\lambda,x,w}^{(\ast)}(\tau,H) &:=&
q^{-\frac{K+h^{\vee}}{2}|x|^2}
\sum_{\gamma \in \overline{Q}^{\vee}} \hspace{-1mm}
e^{2\pi i(\overline{\lambda+\rho}+(K+h^{\vee})\gamma -(K+h^{\vee})w^{-1}x
\, | \, w^{-1}(H+x+\tau x))} 
\nonumber
\\[2mm]
& &
\times \,\ 
q^{\frac{1}{2(K+h^{\vee})}|\overline{\lambda+\rho}
+(K+h^{\vee})\gamma-(K+h^{\vee})w^{-1}x|^2}
\label{vanish:eqn:2024-728b2}
\end{eqnarray}}
\end{subequations}

In this paper we will show that the RHS of \eqref{vanish:eqn:2024-703e1}
vanishes if the nilpotent element $f$ is of the following form:
\begin{equation}
f \, = \sum_{j=1}^n \, e_{-\beta_j} \hspace{10mm} (n \, \geq \, 2)
\label{vanish:eqn:2024-703f}
\end{equation}
namely, the surviving cases are only $f=e_{-\theta}$ and $f=e_{-\theta_s}$.
In \S \ref{subsec:B2:K=-1:QHR:theta:short}, we will see that 
the quantum Hamiltonian reduction with respect to 
$f=e_{-\theta_s}$ does not necessarily vanish in the case $B_2$. 

\medskip

The author is grateful to Professor Victor Kac for recalling my 
attention to our 2018's work \cite{KW2017d} and fruitful 
collaboration in \cite{KW2024}.

\section{Vanishing of the numerators of quantized characters}
\label{sec:vanishing}


Let us begin this section with noticing a simple fact which 
can be shown very easily.

\medskip

\begin{note}
\label{vanish:note:2024-704a}
Let $\overline{\ggg}$ be a finite-dimensional simple Lie algebra 
of rank $\geq 2$. Then, for $\alpha, \beta \in \overline{\Delta}$ 
such that $\alpha \ne \beta$, \, either 
$(\alpha \, | \, \beta^{\vee}) \, \ne \, 2$ \, or \, 
$(\alpha^{\vee} \, | \, \beta) \, \ne \, 2$.
\end{note}

\medskip

\begin{lemma} 
\label{vanish:lemma:2024-706a}
Let $(x,e,f)$ be an $sl_2$-triple such that $
f=\sum_{j=1}^ne_{-\beta_j} \quad (n \geq 2)$. 
Take $\beta' \in \{\beta_1, \, \cdots \, , \, \beta_n\}$ and 
put $w_0 :=r_{\beta'}$. Then 
\begin{enumerate}
\item[{\rm 1)}] \,\ $w_0x \, = \, w_0^{-1}x \, = \, x-\beta'{}^{\vee}$
\qquad and \quad $w_0H \, = w_0^{-1}H \, = \, H$ \quad 
for ${}^{\forall}H \in \overline{\hhh}^f$.
\item[{\rm 2)}] \,\ $f_{\lambda,x,w_0w}(\tau,H) \, = \, 
f_{\lambda,x,w}(\tau,H)$ 
\quad for ${}^{\forall}w \in \overline{W}$ and 
${}^{\forall}H \in \overline{\hhh}^f$.
\end{enumerate}
\end{lemma}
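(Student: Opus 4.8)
The plan is to prove part 1) first by a direct computation with the reflection $w_0 = r_{\beta'}$, and then deduce part 2) from part 1) by inspecting the defining formula \eqref{vanish:eqn:2024-703e2} (or the equivalent \eqref{vanish:eqn:2024-703e3}) for $f_{\lambda,x,w}$. For part 1): since $(x,e,f)$ is an $sl_2$-triple with $f=\sum_{j=1}^n e_{-\beta_j}$ and $[x,f]=-f$, each root vector $e_{-\beta_j}$ must be an eigenvector of $\mathrm{ad}\,x$ with eigenvalue $-1$, i.e.\ $(\beta_j\,|\,x)=1$ for every $j$; in particular $(\beta'\,|\,x)=1$. Then $r_{\beta'}x = x - (\beta'\,|\,x)\,\beta'{}^\vee = x - \beta'{}^\vee$, and since $r_{\beta'}$ is an involution on $\overline{\hhh}$ (self-adjoint for $(\,\cdot\,|\,\cdot\,)$) we get $w_0x = w_0^{-1}x = x-\beta'{}^\vee$. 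For the second assertion, $H\in\overline{\hhh}^f$ means $[f,H]=0$ for the centralizer $\overline{\hhh}^f$ of $f$ in $\overline{\hhh}$, equivalently $(\beta_j\,|\,H)=0$ for all $j$; hence $(\beta'\,|\,H)=0$ and $r_{\beta'}H = H - (\beta'\,|\,H)\,\beta'{}^\vee = H$, so $w_0H = w_0^{-1}H = H$.

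For part 2): I will substitute $w_0 w$ in place of $w$ in \eqref{vanish:eqn:2024-703e2}. The exponent quantities to track are $w^{-1}H$ (via $(w_0w)^{-1}H = w^{-1}w_0^{-1}H$) and $(K+h^\vee)(w_0w)^{-1}x = (K+h^\vee)w^{-1}w_0^{-1}x$, together with the summation variable $\gamma$ running over $\overline{Q}^\vee$. Using part 1), $w_0^{-1}H = H$, so $(w_0w)^{-1}H = w^{-1}H$ and the first factor's $H$-dependence is unchanged. Also $w_0^{-1}x = x - \beta'{}^\vee$, so $(w_0w)^{-1}x = w^{-1}x - w^{-1}\beta'{}^\vee$; since $\beta'\in\overline{\Delta}$, its coroot $\beta'{}^\vee\in\overline{Q}^\vee$, hence $w^{-1}\beta'{}^\vee\in\overline{Q}^\vee$ because $\overline{W}$ preserves the coroot lattice. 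Therefore in both the exponential and the $q$-exponent the shift by $(K+h^\vee)w^{-1}\beta'{}^\vee$ can be absorbed into a change of summation variable $\gamma \mapsto \gamma - w^{-1}\beta'{}^\vee$ (a bijection of $\overline{Q}^\vee$); I should check this works simultaneously in the $\gamma$-term $(K+h^\vee)\gamma$ of the exponential and in the $q$-exponent $|\overline{\lambda+\rho}+(K+h^\vee)\gamma-(K+h^\vee)w^{-1}x|^2$, which it does because $\gamma$ and $w^{-1}x$ enter those expressions only through the combination $\gamma - w^{-1}x$ in the quadratic form, and through $(K+h^\vee)\gamma$ paired against $w^{-1}H$ in the linear term — but here one must be slightly careful, since the linear exponent in \eqref{vanish:eqn:2024-703e2} pairs $(K+h^\vee)\gamma$ with $w^{-1}H$, not with $w^{-1}(H+x)$, so shifting $\gamma$ introduces an extra factor $e^{-2\pi i(K+h^\vee)(w^{-1}\beta'{}^\vee\,|\,w^{-1}H)} = e^{-2\pi i(K+h^\vee)(\beta'{}^\vee\,|\,H)}$, which equals $1$ since $(\beta'\,|\,H)=0$ gives $(\beta'{}^\vee\,|\,H)=0$.

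The step I expect to be the only real point to get right is the bookkeeping in part 2): one must verify that after the change of variable $\gamma \mapsto \gamma - w^{-1}\beta'{}^\vee$ every residual factor is exactly $1$, using in an essential way that $(K+h^\vee)$ is a (positive) integer so that $e^{2\pi i(K+h^\vee)(\cdots\,|\,\cdots)}$ collapses for half-integral pairings, and that $(\beta'\,|\,x)=1$ and $(\beta'\,|\,H)=0$. There is no deep obstacle: both parts are short computations once one records the eigenvalue condition $(\beta_j\,|\,x)=1$ coming from $[x,f]=-f$ and the orthogonality $(\beta_j\,|\,\overline{\hhh}^f)=0$. It may also be worth remarking that part 2) says each $w$-orbit of the subgroup $\langle w_0\rangle = \{1, r_{\beta'}\}$ contributes equal values of $f_{\lambda,x,w}$, while $\varepsilon(w_0 w) = -\varepsilon(w)$ — this sign cancellation is presumably the engine behind the vanishing claim \eqref{vanish:eqn:2024-703f}, and doing the lemma cleanly sets that up, though of course the vanishing itself requires the stronger input $n\geq 2$ so that two such reflections $r_{\beta_1}, r_{\beta_2}$ are available.
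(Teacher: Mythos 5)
Your proof is correct and follows essentially the same route as the paper: part 1) by the eigenvalue condition $(\beta'|x)=1$ and the orthogonality $(\beta'|\overline{\hhh}^f)=0$, and part 2) by the change of summation variable $\gamma\mapsto\gamma-w^{-1}\beta'{}^{\vee}$ in $\overline{Q}^{\vee}$ (the paper simply starts from the rewritten form \eqref{vanish:eqn:2024-703e3}, which makes the cross term you track disappear automatically). One tiny remark: the integrality of $K+h^{\vee}$ is not actually needed for your residual factor, since $(\beta'{}^{\vee}|H)=0$ exactly.
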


\begin{proof} 1) is clear. \quad 
2) \, By \eqref{vanish:eqn:2024-703e3}, one has
{\allowdisplaybreaks
\begin{eqnarray*}
& &
f_{\lambda,x,w_0w}(\tau,H) 
\\[2mm]
&=&
\sum_{\gamma \in \overline{Q}^{\vee}}
e^{2\pi i(\overline{\lambda+\rho}+(K+h^{\vee})\gamma
-(K+h^{\vee})w^{-1}w_0^{-1}x \, | \, w^{-1}w_0^{-1}H)} \, 
q^{\frac{1}{2(K+h^{\vee})}|\overline{\lambda+\rho}
+(K+h^{\vee})\gamma-(K+h^{\vee})w^{-1}w_0^{-1}x|^2}
\\[2mm]
&=&
\sum_{\gamma \in \overline{Q}^{\vee}}
e^{2\pi i(\overline{\lambda+\rho}+(K+h^{\vee})\gamma
-(K+h^{\vee})w^{-1}(x-\beta'{}^{\vee}) \, | \, w^{-1}H)} \, 
q^{\frac{1}{2(K+h^{\vee})}|\overline{\lambda+\rho}
+(K+h^{\vee})\gamma-(K+h^{\vee})w^{-1}(x-\beta'{}^{\vee})|^2}
\end{eqnarray*}}
Putting $\gamma' := \gamma+w^{-1}\beta'{}^{\vee}$, this equation 
is rewritten as follows:
{\allowdisplaybreaks
\begin{eqnarray*}
&=&
\sum_{\gamma' \in \overline{Q}^{\vee}}
e^{2\pi i(\overline{\lambda+\rho}+(K+h^{\vee})\gamma'
-(K+h^{\vee})w^{-1}x \, | \, w^{-1}H)} \, 
q^{\frac{1}{2(K+h^{\vee})}|\overline{\lambda+\rho}
+(K+h^{\vee})\gamma'-(K+h^{\vee})w^{-1}x|^2}
\\[2mm]
&=&
f_{\lambda,x,w}(\tau,H) \,\ ,
\end{eqnarray*}}
proving 2).
\end{proof}

\medskip

\begin{thm} 
\label{vanish:thm:2024-801a}
Let $\overline{\ggg}$ be a simple Lie algebra of rank $\geq 2$.
Assume that $(\lambda, \alpha)$ is an element in 
$\hhh^{\ast} \times \overline\Delta_+$ satisfying the condition 
\eqref{vanish:eqn:2024-703b} 
and $(x,e,f=\sum_{j=1}^ne_{-\beta_j})$ is an $sl_2$-triple 
such that $n \geq 2$. Then
$$
A^{[\alpha]}_{\lambda+\rho}
\Big(\tau, \,\ -\tau x+H, \,\ \frac{\tau}{2} \, |x|^2\Big) \,\ = \,\ 0
$$
\end{thm}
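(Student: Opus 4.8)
The plan is to combine Theorem~\ref{vanish:thm:2024-704a} with the symmetry established in Lemma~\ref{vanish:lemma:2024-706a}. By part~1) of Theorem~\ref{vanish:thm:2024-704a}, for any admissible choice of $\beta \in \overline{\Delta}$ satisfying \eqref{vanish:eqn:2024-703c} we have
$$
A^{[\alpha]}_{\lambda+\rho}\Big(\tau, \,\ -\tau x+H, \,\ \tfrac{\tau}{2}\,|x|^2\Big)
\, = \, \frac14\,(\beta|x) \sum_{w \in \overline{W}} \varepsilon(w)\,(\beta^{\vee}|w\alpha)\, f_{\lambda,x,w}(\tau,H),
$$
and by part~2) this sum is independent of the choice of such $\beta$. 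The idea is to exploit the left action of $w_0 := r_{\beta'}$ (where $\beta' \in \{\beta_1,\dots,\beta_n\}$) as a change of summation variable $w \mapsto w_0 w$ in the sum. Since $r_{\beta'}$ is a reflection, $\varepsilon(w_0 w) = -\varepsilon(w)$; by Lemma~\ref{vanish:lemma:2024-706a}~2), $f_{\lambda,x,w_0 w}(\tau,H) = f_{\lambda,x,w}(\tau,H)$; and $(\beta^{\vee}|w_0 w\alpha) = (w_0\beta^{\vee}|w\alpha) = (r_{\beta'}\beta^{\vee}|w\alpha)$. Hence reindexing the RHS gives
$$
\sum_{w} \varepsilon(w)\,(\beta^{\vee}|w\alpha)\,f_{\lambda,x,w}
= -\sum_{w} \varepsilon(w)\,(r_{\beta'}\beta^{\vee}|w\alpha)\,f_{\lambda,x,w}.
$$
Adding the two expressions, the RHS is $\tfrac18(\beta|x)\sum_w \varepsilon(w)\,\big((\beta^{\vee}+r_{\beta'}\beta^{\vee})|w\alpha\big)\,f_{\lambda,x,w}$, so it suffices to make $\beta^{\vee}+r_{\beta'}\beta^{\vee} = 0$, i.e. to find an admissible $\beta$ that is anti-fixed by $r_{\beta'}$, equivalently proportional to $\beta'$.

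The natural candidate is therefore $\beta = \beta'$ itself. I would verify that $\beta'$ satisfies the two conditions in \eqref{vanish:eqn:2024-703c}. The second condition $(\beta'|\overline{\hhh}^f) = \{0\}$: for $H \in \overline{\hhh}^f$ we have $[f,H]=0$, and writing $f = \sum_j e_{-\beta_j}$, the bracket $[H, e_{-\beta_j}] = -(\beta_j|H)e_{-\beta_j}$ (up to identification of $\overline{\hhh}$ with $\overline{\hhh}^*$); since the root vectors $e_{-\beta_j}$ are linearly independent, $[f,H]=0$ forces $(\beta_j|H)=0$ for every $j$, in particular $(\beta'|H)=0$. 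The first condition $(\beta'|x) \in \zzz$: since $(x,e,f)$ is an $sl_2$-triple with $[x,f]=-f$, applying $\mathrm{ad}\,x$ to $f=\sum_j e_{-\beta_j}$ and comparing root-space components gives $(\beta_j|x) = 1$ for every $j$ — in particular $(\beta'|x) = 1 \in \zzz$. (This last fact also shows $(\beta|x) \ne 0$, so the vanishing genuinely comes from the Weyl-sum cancellation, not from the prefactor.)

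With $\beta = \beta'$ chosen, $r_{\beta'}\beta'{}^{\vee} = -\beta'{}^{\vee}$, so $\beta'{}^{\vee} + r_{\beta'}\beta'{}^{\vee} = 0$ and the averaged expression vanishes identically. Since the original RHS equals this averaged expression (the two reindexings of the \emph{same} $\beta=\beta'$ sum are equal as well as opposite, hence both zero), we conclude
$$
A^{[\alpha]}_{\lambda+\rho}\Big(\tau, \,\ -\tau x+H, \,\ \tfrac{\tau}{2}\,|x|^2\Big) = 0.
$$

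The only place that requires care — and the step I expect to be the main obstacle — is confirming that $\beta'$ is a legitimate choice in Theorem~\ref{vanish:thm:2024-704a}, i.e. that the conditions \eqref{vanish:eqn:2024-703c} really do hold for $\beta = \beta'$ and that $n \geq 2$ is genuinely used. The hypothesis $n \geq 2$ does not enter the cancellation argument above at all; rather, it must be what guarantees that $r_{\beta'}$ is a nontrivial element of $\overline{W}$ that actually fixes $\overline{\hhh}^f$ pointwise and acts on $x$ by $x \mapsto x - \beta'{}^{\vee}$ in a way compatible with the $f$-structure — in the rank-one reduction $f = e_{-\theta}$ one has $x = \theta^{\vee}$ (or $\theta_s^{\vee}$), $\overline{\hhh}^f$ is a hyperplane, and $\beta' = \theta$ is \emph{not} orthogonal to that hyperplane's relevant directions in the required sense, so condition \eqref{vanish:eqn:2024-703c} or the consistency of Lemma~\ref{vanish:lemma:2024-706a}~1) fails. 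So I would spend most of the write-up pinning down precisely why, when $n \geq 2$, the element $\beta'$ satisfies \eqref{vanish:eqn:2024-703c}, invoking the $sl_2$-triple relations and the linear independence of the $e_{-\beta_j}$ as sketched above, and then the vanishing is immediate.
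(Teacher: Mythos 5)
There is a genuine gap: a sign error in the ``averaging'' step, which makes your choice $\beta=\beta'$ collapse to a tautology. Writing $S(\mu):=\sum_{w\in\overline{W}}\varepsilon(w)\,(\mu|w\alpha)\,f_{\lambda,x,w}(\tau,H)$, the reindexing $w\mapsto w_0w$ together with Lemma~\ref{vanish:lemma:2024-706a}~2) gives the two expressions $\tfrac14(\beta|x)S(\beta^{\vee})$ and $-\tfrac14(\beta|x)S(r_{\beta'}\beta^{\vee})$ for the same quantity; adding them therefore yields
\[
A^{[\alpha]}_{\lambda+\rho}\Big(\tau,-\tau x+H,\tfrac{\tau}{2}|x|^2\Big)
=\tfrac18(\beta|x)\,S\big(\beta^{\vee}\,\mathbf{-}\,r_{\beta'}\beta^{\vee}\big)
=\tfrac18(\beta|x)(\beta^{\vee}|\beta')\,S(\beta'^{\vee}),
\]
with a \emph{minus} sign, not the plus sign you wrote. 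With $\beta=\beta'$ this reads $S(\beta'^{\vee})=\tfrac12\cdot 2\cdot S(\beta'^{\vee})$, i.e.\ no information at all: the identity $S(\mu)=-S(r_{\beta'}\mu)$ is automatically satisfied by $\mu=\beta'^{\vee}$ since $r_{\beta'}\beta'^{\vee}=-\beta'^{\vee}$ and $S$ is linear and odd. A decisive sanity check, which you half-noticed yourself, is that $n\ge 2$ never enters your cancellation: but for $n=1$, $f=e_{-\theta}$, $x=\theta/2$, the root $\beta'=\theta$ \emph{does} satisfy \eqref{vanish:eqn:2024-703c} (one has $(\theta|x)=1\in\zzz$ and $\overline{\hhh}^f=\theta^{\perp}$), and Lemma~\ref{vanish:lemma:2024-706a} holds verbatim for $n=1$; so if your argument were valid it would force the vanishing of \eqref{vanish:2024-801a1}, which is visibly nonzero. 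Your attempted explanation that \eqref{vanish:eqn:2024-703c} fails for the minimal nilpotent is incorrect.

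The correct use of the reindexing identity $2S(\beta^{\vee})=(\beta^{\vee}|\beta')S(\beta'^{\vee})$ requires \emph{two distinct} roots $\beta\ne\beta'$ taken from $\{\beta_1,\dots,\beta_n\}$ --- this is exactly where $n\ge2$ enters --- chosen by Note~\ref{vanish:note:2024-704a} so that $(\beta^{\vee}|\beta')\ne2$. Both are admissible in the sense of \eqref{vanish:eqn:2024-703c} with $(\beta|x)=(\beta'|x)=1$ (your verification of this part is fine), so part~2) of Theorem~\ref{vanish:thm:2024-704a} gives $\tfrac14 S(\beta^{\vee})=\tfrac14 S(\beta'^{\vee})=A^{[\alpha]}_{\lambda+\rho}(\tau,-\tau x+H,\tfrac{\tau}{2}|x|^2)$, whence $\big[2-(\beta^{\vee}|\beta')\big]A^{[\alpha]}_{\lambda+\rho}=0$ and the conclusion follows. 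This is the paper's argument; your write-up is missing the second root and the appeal to the independence statement.
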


\begin{proof} Choose $\beta, \beta' \in \{\beta_1, \, \cdots \, , \, \beta_n\}$
such that $\beta \ne \beta'$ and $(\beta^{\vee}|\beta') \ne 2$, and 
put $w_0 :=r_{\beta'}$. Then, replacing $w$ with $w_0w$ in 
\eqref{vanish:eqn:2024-703e1}
and using Lemma \ref{vanish:lemma:2024-706a}
and noticing that $w_0\beta^{\vee} \, = \, 
w_0^{-1}\beta^{\vee} \, = \, 
\beta^{\vee}-(\beta^{\vee}| \beta') \beta'{}^{\vee}$, one has 
{\allowdisplaybreaks
\begin{eqnarray*}
& & \hspace{-12mm}
A^{[\alpha]}_{\lambda+\rho}
\Big(\tau, \,\ -\tau x+H, \,\ \frac{\tau}{2} \, |x|^2\Big)
\,\ = \,\ 
\frac14 \,  \sum_{w \in \overline{W}}
\varepsilon(w_0w) \, (\beta^{\vee}| w_0w\alpha) \, 
f_{\lambda,x,w_0w}(\tau,H)
\\[2mm]
&=&- \, 
\frac14 \, \sum_{w \in \overline{W}}
\varepsilon(w) \, (w_0^{-1}\beta^{\vee}| w\alpha) \, 
f_{\lambda,x,w}(\tau,H)
\\[2mm]
&=&- \, 
\frac14 \, \sum_{w \in \overline{W}}
\varepsilon(w) \, (\beta^{\vee}| w\alpha) \, 
f_{\lambda,x,w}(\tau,H)
+ \, 
(\beta^{\vee}|\beta') \ 
\underbrace{\frac14 \, \sum_{w \in \overline{W}}
\varepsilon(w) \, (\beta'{}^{\vee}| w\alpha) \, 
f_{\lambda,x,w}(\tau,H)}_{\substack{|| \\[-1mm] 
{\displaystyle \hspace{5mm}
A^{[\alpha]}_{\lambda+\rho}
(\tau, \,\ -\tau x+H, \,\ \tfrac{\tau}{2} \, |x|^2)
}}}
\\[-5mm]
&=&
\big[-1+(\beta^{\vee}|\beta')\big] \, A^{[\alpha]}_{\lambda+\rho}
\Big(\tau, \,\ -\tau x+H, \,\ \frac{\tau}{2} \, |x|^2\Big)
\end{eqnarray*}}
So one has
$$
\big[
\underbrace{2-(\beta^{\vee}|\beta')}_{\substack{
\rotatebox{-90}{$\ne$} \\[0mm] {\displaystyle 0
}}}\big] \, A^{[\alpha]}_{\lambda+\rho}
\Big(\tau, \,\ -\tau x+H, \,\ \frac{\tau}{2} \, |x|^2\Big) \, = \, 0 \, ,
$$
proving Theorem \ref{vanish:thm:2024-801a}.
\end{proof}

\medskip

The calculation in \S  \ref{subsec:B2:K=-1:QHR:theta:short}
shows that 
$A^{[\alpha]}_{\lambda+\rho}(\tau, -\tau x+H, \frac{\tau}{2}|x|^2)$ 
for $\alpha=\theta_s$ does not necessarily vanish.
From  Theorem 4.1 in \cite{KW2017d} and the above Theorem 
\ref{vanish:thm:2024-801a}, we immediately obtain the 
following:

\medskip

\begin{cor} 
\label{vanish:cor:2024-801a}
Let $\overline{\ggg}$ be a simple Lie algebra of rank $\geq 2$.
Assume that $(\lambda, \alpha)$ is an element in 
$\hhh^{\ast} \times \overline\Delta_+$ satisfying the conditions 
{\rm (i)} $\sim$ {\rm (iv)} in Theorem 4.1 in \cite{KW2017d}
and $(x,e,f=\sum_{j=1}^ne_{-\beta_j})$ is an $sl_2$-triple 
such that $n \geq 2$. Then the quantum Hamiltonian reduction 
$H(\lambda)$ of the $\ggg$-module $L(\lambda)$ with respect to 
this $sl_2$-triple vanishes.
\end{cor}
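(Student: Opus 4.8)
The plan is to reduce the final corollary to Theorem \ref{vanish:thm:2024-801a} by identifying the two ingredients that connect them. First I would recall that, under the hypotheses (i)$\sim$(iv) of Theorem 4.1 in \cite{KW2017d}, the numerator of the normalized character of the irreducible $\ggg$-module $L(\lambda)$ is exactly $A^{[\alpha]}_{\lambda+\rho}$ for the relevant $\alpha \in \overline{\Delta}_+$, and that the denominator is a product of theta-type factors that does not vanish identically. In particular, conditions (i)$\sim$(iv) imply in particular the condition \eqref{vanish:eqn:2024-703b}, so Theorem \ref{vanish:thm:2024-801a} is directly applicable with the same pair $(\lambda,\alpha)$.

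Next I would invoke the general principle, established in \cite{KW2024}, that the quantum Hamiltonian reduction functor sends $L(\lambda)$ to the $W(\ggg,x,f)_K$-module whose normalized character has numerator equal to the specialization $A^{[\alpha]}_{\lambda+\rho}\big(\tau,\,-\tau x+H,\,\tfrac{\tau}{2}|x|^2\big)$ (with $H \in \overline{\hhh}^f$), while the denominator of that character is again a nonzero product coming from the $W$-algebra structure. Thus the reduced module $H(\lambda)$ has a character that is a ratio of a possibly-vanishing numerator over a nonvanishing denominator; hence $H(\lambda)=0$ as soon as the numerator vanishes identically as a function of $(\tau,H)$.

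Then the argument is immediate: the $sl_2$-triple $(x,e,f)$ with $f=\sum_{j=1}^n e_{-\beta_j}$ and $n\ge 2$ satisfies precisely the hypothesis of Theorem \ref{vanish:thm:2024-801a}, so
$$
A^{[\alpha]}_{\lambda+\rho}\Big(\tau,\,-\tau x+H,\,\tfrac{\tau}{2}|x|^2\Big) = 0
$$
identically in $(\tau,H)$. Feeding this into the previous paragraph, the character of $H(\lambda)$ is zero, and since a nonzero module has a nonzero character, $H(\lambda)=0$. I would also note the rank $\ge 2$ hypothesis is needed only to guarantee, via Note \ref{vanish:note:2024-704a}, that the choice of $\beta,\beta'$ in the proof of Theorem \ref{vanish:thm:2024-801a} is possible; it is harmless here since $n\ge 2$ already forces $\overline{\ggg}$ to have rank $\ge 2$.

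The main obstacle, if any, is not in the logic but in the bookkeeping: one must make sure that "vanishing of the numerator of the character" genuinely forces "vanishing of the module," i.e. that the denominator appearing in the quantum-reduced character is not itself identically zero and that the character map is injective on the relevant category of $W$-algebra modules. Both points are part of the setup imported from \cite{KW2024} and \cite{KW2017d}, so the corollary follows by assembling these cited facts with Theorem \ref{vanish:thm:2024-801a}; no new computation is required.
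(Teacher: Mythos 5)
Your proposal is correct and follows essentially the same route as the paper, which gives no separate proof but states that the corollary is obtained ``immediately'' from Theorem 4.1 in \cite{KW2017d} (which supplies the character numerator as $A^{[\alpha]}_{\lambda+\rho}$ and hence the applicability of condition \eqref{vanish:eqn:2024-703b}) together with Theorem \ref{vanish:thm:2024-801a}. You have merely made explicit the implicit steps --- vanishing of the specialized numerator plus nonvanishing of the denominator forces vanishing of the character and hence of $H(\lambda)$ --- which is exactly the intended argument.
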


\section{Example $\sim$ $B_2$ and $K=-1$}
\label{sec:ex:B2}

We consider the simple Lie algebra $B_2$ with the 
Dynkin diagram \hspace{-5mm}
\setlength{\unitlength}{1mm}
\begin{picture}(21,9)
\put(6,1){\circle{3}}
\put(17,1){\circle{3}}
\put(7.5,1.5){\vector(1,0){8.1}}
\put(7.5,0.5){\vector(1,0){8.1}}
\put(6,5){\makebox(0,0){$\alpha_1$}}
\put(17,5){\makebox(0,0){$\alpha_2$}}
\end{picture} and inner product 
$\big((\alpha_i|\alpha_j)\big)_{i,j=1,2}
= \begin{pmatrix}
2 & -1 \\
-1 & 1
\end{pmatrix} $. 
The Weyl vector of $B_2$ is 
$\overline{\rho} \, = \, \frac32 \, \alpha_1+2\alpha_2$
with its square length $|\rho|^2 \, = \, \frac52$ and the dual 
Coxeter number $h^{\vee}=3$.
The highest root is $\theta=\alpha_1+2\alpha_2$ 
and the highest short root is $\theta_s=\alpha_1+\alpha_2$. 
The coroot lattice $\overline{Q}^{\vee}
=\zzz \alpha_1^{\vee}+\zzz \alpha_2^{\vee}$ is written as 
{\allowdisplaybreaks
\begin{eqnarray*}
\overline{Q}^{\vee} &=& \big\{
j \alpha_1+k(\alpha_1+2\alpha_2) \quad ; \quad j, \, k \, \in \, \zzz
\big\}
\\[2mm]
&=&
\big\{j (\alpha_1+\alpha_2)+k\alpha_2 \quad ; \quad 
j, \, k \, \in \, \zzz, \quad j+k \, \in \, 2 \, \zzz\big\} \, .
\end{eqnarray*}}
The fundamental integral forms $\overline{\Lambda}_i$ \,\ 
$(i=1, 2)$ defined by 
$(\overline{\Lambda}_i|\alpha_j^{\vee})=\delta_{i,j}$
are related to the simple roots as follows:
$$
\left\{
\begin{array}{lcr}
\alpha_1 &=& 2 \, \overline{\Lambda}_1-2 \, \overline{\Lambda}_2 
\\[1.5mm]
\alpha_2 &=& - \, \overline{\Lambda}_1+2 \, \overline{\Lambda}_2
\end{array}\right. \hspace{20mm} \left\{
\begin{array}{lcc}
\overline{\Lambda}_1 &=& \alpha_1+\alpha_2 
\\[1.5mm]
\overline{\Lambda}_2 &=& \frac12 \, (\alpha_1+2\alpha_2)
\end{array}\right. 
$$
As usual, for each $\alpha \in \overline{\Delta}$, we choose 
an element $e_{\alpha}$ in the root space $\overline{\ggg}_{\alpha}$ 
such that $(e_{\alpha}|e_{-\alpha})=1$ namely
$[e_{\alpha}. \, e_{-\alpha}]=\alpha$.

\subsection{Quantum reduction w.r.to $f=e_{-\theta}$}
\label{subsec:B2:K=-1:QHR:theta}


In this section, we consider the quantum Hamiltonian reduction 
with respect to the $sl_2$-triple $(x,e,f)=(\frac12 \theta, \, 
\frac12 e_{\theta}, \, e_{-\theta})$  for which 
$\overline{\hhh}^f \,\ = \,\ \ccc \, \alpha_1$.

\medskip

First we consider the weight $\lambda = - \Lambda_0$, 
for which $\lambda+\rho=2\Lambda_0+\overline{\rho}$ and 
$(\lambda+\rho| \, \delta-\theta)=0$. Then, by computation 
letting $\alpha=\beta=\theta$ in \eqref{vanish:eqn:2024-703e1} 
and \eqref{vanish:eqn:2024-728a1} 
and \eqref{vanish:eqn:2024-728a2}, we obtain the following:
\\ %
%
\begin{subequations}
{\allowdisplaybreaks
\begin{eqnarray}
A^{[\theta]}_{-\Lambda_0+\rho}
\Big(\tau, \,\ -\tau x+z\alpha_1, \,\ \frac{\tau}{4}\Big)
&=&  
\dfrac{-i}{2} \, [\vartheta_{00}+\vartheta_{01}](\tau,0) \cdot 
\vartheta_{11}(\tau, 2z)
\label{vanish:2024-801a1}
\\[2mm]
A^{[\theta]\, (-)}_{-\Lambda_0+\rho}
\Big(\tau, \,\ -\tau x+z\alpha_1, \,\ \frac{\tau}{4}\Big)
&=& 
\dfrac{-i}{2} \, [\vartheta_{00}+\vartheta_{01}](\tau,0) \cdot 
\vartheta_{11}(\tau, 2z)
\label{vanish:2024-801a2}
\\[2mm]
A^{[\theta] \, (\ast)}_{-\Lambda_0+\rho}
\Big(\tau, \,\ -\tau x+z\alpha_1, \,\ \frac{\tau}{4}\Big)
&=&
\dfrac{-i}{2} \, [\vartheta_{00}-\vartheta_{01}](\tau,0) \cdot 
\vartheta_{11}(\tau, 2z)
\label{vanish:2024-801a3}
\end{eqnarray}}
\end{subequations}
where $\vartheta_{ab}(\tau,z)$ are the Mumford's theta functions 
(\cite{Mum}).

\medskip

Next we consider the weight 
$\lambda \, = \, - \, \Lambda_0+\overline{\Lambda}_2$, 
for which 
$\lambda+\rho=2\Lambda_0+\overline{\Lambda}_1+2\overline{\Lambda}_2$ 
and 
$(\lambda+\rho| \, \delta-(\alpha_1+\alpha_2))=0$. Then, 
by computation letting $\alpha=\alpha_1+\alpha_2$ and 
$\beta=\theta$ in \eqref{vanish:eqn:2024-703e1} 
and \eqref{vanish:eqn:2024-728a1} 
and \eqref{vanish:eqn:2024-728a2}, we obtain the following:
%
{\allowdisplaybreaks
\begin{eqnarray}
A^{[\alpha_1+\alpha_2]}_{- \, \Lambda_0+\overline{\Lambda}_2+\rho}
(\tau, \,\ -\tau x+z\alpha_1, \,\ \tfrac{\tau}{4})
&=&  - \, i \, 
\vartheta_{10}(\tau, 0) \cdot \vartheta_{11}(\tau, 2z)
\label{vanish:eqn:2024-801b}
\\[2mm]
A^{[\alpha_1+\alpha_2] \, (-)}_{- \, \Lambda_0+\overline{\Lambda}_2+\rho}
(\tau, \,\ -\tau x+z\alpha_1, \,\ \tfrac{\tau}{4})
&=& \hspace{5mm}
A^{[\alpha_1+\alpha_2] \, (\ast)}_{- \, \Lambda_0+\overline{\Lambda}_2+\rho}
(\tau, \,\ -\tau x+z\alpha_1, \,\ \tfrac{\tau}{4})
\nonumber
\\[2mm]
&=&
- \,\ 
A^{[\alpha_1+\alpha_2]}_{- \, \Lambda_0+\overline{\Lambda}_2+\rho}
(\tau, \,\ -\tau x+z\alpha_1, \,\ \tfrac{\tau}{4})
\nonumber
\end{eqnarray}}

The modular transformation of these functions are easily calculated 
by the transformation properties of $\vartheta_{ab}$ as follows:

\medskip 

\begin{note} 
\label{vanish:note:2024-801a}
Define the functions $\psi_i^{(1)}(\tau,z) \,\ (i=1,2,3)$ by
$$\left\{
\begin{array}{lcr}
\psi_1^{(1)}(\tau,z) &:=& A^{[\theta]}_{-\Lambda_0+\rho}
(\tau, \, -\frac{\tau}{2} \theta+z\alpha_1, \, \frac{\tau}{4}) \\[3mm]
\psi_2^{(1)}(\tau,z) &:=& A^{[\theta] (\ast)}_{-\Lambda_0+\rho}
(\tau, \, -\frac{\tau}{2} \theta+z\alpha_1, \, \frac{\tau}{4}) \\[3mm]
\psi_3^{(1)}(\tau,z) &:=& 
A^{[\alpha_1+\alpha_2]}_{-\Lambda_0+\overline{\Lambda}_2+\rho}
(\tau, \, -\frac{\tau}{2} \theta+z\alpha_1, \, \frac{\tau}{4})
\end{array}\right.
$$
Then
\begin{enumerate}
\item[{\rm 1)}]
\begin{enumerate}
\item[{\rm (i)}] \quad $\psi_1^{(1)}(\tau,z)+\psi_2^{(1)}(\tau,z) 
\,\ = \,\ 
- \, i  \,\ \vartheta_{00}(\tau,0) \cdot \vartheta_{11}(\tau,2z)$
\item[{\rm (ii)}] \quad $\psi_1^{(1)}(\tau,z)-\psi_2^{(1)}(\tau,z) 
\,\ = \,\ 
- \, i  \,\ \vartheta_{01}(\tau,0) \cdot \vartheta_{11}(\tau,2z)$
\item[{\rm (iii)}] \quad $\psi_3^{(1)}(\tau,z) \hspace{20mm}
\,\ = \,\ 
- \, i  \,\ \vartheta_{10}(\tau,0) \cdot \vartheta_{11}(\tau,2z)$
\end{enumerate}
\item[{\rm 2)}] 
\begin{enumerate}
\item[{\rm (i)}] \quad $[\psi_1^{(1)}+\psi_2^{(1)}]
(-\frac{1}{\tau}, \frac{z}{\tau}) 
\,\ = \,\ 
- \, \tau \, e^{\frac{4\pi iz^2}{\tau}} \, 
[\psi_1^{(1)}+\psi_2^{(1)}](\tau,z)$
\item[{\rm (ii)}] \quad $[\psi_1^{(1)}-\psi_2^{(1)}]
(-\frac{1}{\tau}, \frac{z}{\tau})
\,\ = \,\ 
- \, \tau \, e^{\frac{4\pi iz^2}{\tau}} \, \psi_3^{(1)}(\tau,z)$
\item[{\rm (iii)}] \quad $\psi_3^{(1)}
(-\frac{1}{\tau}, \frac{z}{\tau}) \hspace{13.5mm}
\,\ = \,\ 
- \, \tau \, e^{\frac{4\pi iz^2}{\tau}} \, 
[\psi_1^{(1)}-\psi_2^{(1)}](\tau,z)$
\end{enumerate}
\item[{\rm 3)}]
\begin{enumerate}
\item[{\rm (i)}] \quad $[\psi_1^{(1)}+\psi_2^{(1)}](\tau+1,z)
\,\ = \,\ 
e^{\frac{\pi i}{4}} \, [\psi_1^{(1)}-\psi_2^{(1)}](\tau,z)$
\item[{\rm (ii)}] \quad $[\psi_1^{(1)}-\psi_2^{(1)}](\tau+1,z)
\,\ = \,\ 
e^{\frac{\pi i}{4}} \, [\psi_1^{(1)}+\psi_2^{(1)}](\tau,z)$
\item[{\rm (iii)}] \quad $\psi_3^{(1)}(\tau+1,z) \hspace{13.5mm}
\,\ = \,\ 
i \, \psi_3^{(1)}(\tau,z)$
\end{enumerate}
\end{enumerate}
\end{note}

\medskip

\begin{rem}
\label{vanish:rem:2024-801a}
The modular transformation properties of the functions 
$\psi_i^{(1)}$'s in the above Note \ref{vanish:note:2024-801a} 
are quite different from the ones of 
$A_{\lambda+\rho}^{[\alpha]}$'s in \cite{KW2024}.
This is because the condition $(\lambda+\rho| \delta-\alpha)=0$ 
is not assumed in the calculation of 
the modular transformation formulas in \cite{KW2024}.
Also these modular transformation properties of the numerators 
in Note \ref{vanish:note:2024-801a} are not in consistency with 
those of the denominators given in \cite{KW2024}.
This incoincidence of the modular transformation
properties between numerators and denominators seems to suggest 
that the condition (iv) in Theorem 4.1 in \cite{KW2024} is not
satisfied in this case namely in the case $B_2$ and $K=-1$.
\end{rem}

\subsection{Quantum reduction w.r.to $f=e_{-\theta_s}$}
\label{subsec:B2:K=-1:QHR:theta:short}


In this section, we consider the quantum Hamiltonian reduction 
with respect to the $sl_2$-triple $(x,e,f)=(\alpha_1+\alpha_2, \, 
e_{\alpha_1+\alpha_2}, \, e_{-(\alpha_1+\alpha_2)})$  for which 
$\overline{\hhh}^f \,\ = \,\ \ccc \, \alpha_2$.

\medskip

First we consider the weight $\lambda = - \Lambda_0$, 
for which $\lambda+\rho=2\Lambda_0+\overline{\rho}$ and 
$(\lambda+\rho| \, \delta-\theta)=0$. Then, by computation 
letting $\alpha=\theta$ and $\beta=\alpha_1+\alpha_2=\theta_s$ in 
\eqref{vanish:eqn:2024-703e1} and \eqref{vanish:eqn:2024-728a1} 
and \eqref{vanish:eqn:2024-728a2}, we obtain the following:
\begin{subequations}
{\allowdisplaybreaks
\begin{eqnarray}
A^{[\theta]}_{-\Lambda_0+\rho}
(\tau, \, -\tau \theta_s +z\alpha_2, \, \tfrac{\tau}{2})
&=& \hspace{-2mm}
\sum_{\substack{j, \, k \, \in \, \zzz \\[1mm]
{\rm s.t.} \,\ j+k \, \in \, \zzz_{\rm even} }} \hspace{-5mm}
\big[e^{4\pi i(j+\frac14)z} -e^{-4\pi i(j+\frac14)z}\big] \, 
q^{(j+\frac14)^2 \, + \, (k+\frac14)^2}
\label{vanish:eqn:2024-801c1}
\\[2mm]
A^{[\theta] \, (-)}_{-\Lambda_0+\rho}
(\tau, \, -\tau \theta_s+z\alpha_2, \, \tfrac{\tau}{2})
&=& 
- \, A^{[\theta]}_{\lambda+\rho}
(\tau, \,\ -\tau \theta_s+z\alpha_2, \,\ \tfrac{\tau}{2})
\label{vanish:eqn:2024-801c2}
\\[2mm]
A^{[\theta] \, (\ast)}_{-\Lambda_0+\rho}
(\tau, \, -\tau \theta_s+z\alpha_2, \, \tfrac{\tau}{2})
&=& 
- \hspace{-5mm}
\sum_{\substack{j, \, k \, \in \, \zzz \\[1mm]
{\rm s.t.} \,\ j+k \, \in \, \zzz_{\rm odd} }} \hspace{-5mm}
\big[e^{4\pi i(j+\frac14)z} -e^{-4\pi i(j+\frac14)z}\big] \, 
q^{(j+\frac14)^2 \, + \, (k+\frac14)^2}
\label{vanish:eqn:2024-801c3}
\end{eqnarray}}
\end{subequations}

Next we consider the weight $\lambda = - \Lambda_0+\overline{\Lambda}_2$, 
for which 
$\lambda+\rho=2\Lambda_0+\overline{\Lambda}_2+2\overline{\Lambda}_2$ and 
$(\lambda+\rho| \, \delta-(\alpha_1+\alpha_2))=0$. Then, by computation 
letting $\alpha=\beta=\alpha_1+\alpha_2=\theta_s$ in 
\eqref{vanish:eqn:2024-703e1} and \eqref{vanish:eqn:2024-728a1} 
and \eqref{vanish:eqn:2024-728a2}, we obtain the following:
%
\begin{equation}
A^{[\alpha_1+\alpha_2]}_{- \Lambda_0+\overline{\Lambda}_2+\rho}
(\tau, \,\ -\tau \theta_s+z\alpha_2, \,\ \tfrac{\tau}{2})
\,\ = \,\ 
\theta^{(-)}_{0,1}(\tau,0) \,\ \theta^{(-)}_{1,1}(\tau,2z)
\label{vanish:eqn:2024-801d}
\end{equation}
where \quad $\theta_{j,m}^{(\pm)}(\tau,z) := \sum\limits_{k \in \zzz}
(\pm 1)^k e^{2\pi im(k+\frac{j}{2m})z} q^{m(k+\frac{j}{2m})^2}$
\,\ is the Jacobi's theta function.

\medskip

The modular transformation of these functions are easily calculated 
by the transformation properties of the Jacobi's theta functions 
as follows:

\medskip

\begin{note} 
\label{vanish:note:2024-801b}
Define the functions $\psi_i^{(2)}(\tau,z) \,\ (i=1,2,3)$ by
$$\left\{
\begin{array}{lcr}
\psi_1^{(2)}(\tau,z) &:=& A^{[\theta]}_{-\Lambda_0+\rho}
(\tau, \, -\tau \theta_s+z\alpha_2, \, \frac{\tau}{2}) \\[3mm]
\psi_2^{(2)}(\tau,z) &:=& A^{[\theta] (\ast)}_{-\Lambda_0+\rho}
(\tau, \, -\tau \theta_s+z\alpha_2, \, \frac{\tau}{2}) \\[3mm]
\psi_3^{(2)}(\tau,z) &:=& 
A^{[\alpha_1+\alpha_2]}_{-\Lambda_0+\overline{\Lambda}_2+\rho}
(\tau, \, -\tau \theta_s+z\alpha_2, \, \frac{\tau}{2})
\end{array}\right.
$$
Then
\begin{enumerate}
\item[{\rm 1)}]
\begin{enumerate}
\item[{\rm (i)}] \quad $\psi_1^{(2)}(\tau,z)+\psi_2^{(2)}(\tau,z) 
\,\ = \,\ 
\theta_{\frac12,1}^{(-)}(\tau,0) \, 
\big[\theta_{\frac12,1}^{(-)}-\theta_{-\frac12,1}^{(-)}\big](\tau,2z)$
\item[{\rm (ii)}] \quad $\psi_1^{(2)}(\tau,z)-\psi_2^{(2)}(\tau,z) 
\,\ = \,\ 
\theta_{\frac12,1}^{(+)}(\tau,0) \, 
\big[\theta_{\frac12,1}^{(+)}-\theta_{-\frac12,1}^{(+)}\big](\tau,2z)$
\item[{\rm (iii)}] \quad $\psi_3^{(2)}(\tau,z) \hspace{20mm}
\,\ = \,\ 
\theta_{0,1}^{(-)}(\tau,0) \, \theta_{1,1}^{(-)}(\tau,2z)$
\end{enumerate}
\item[{\rm 2)}] 
\begin{enumerate}
\item[{\rm (i)}] \quad $[\psi_1^{(2)}+\psi_2^{(2)}]
(-\frac{1}{\tau}, \frac{z}{\tau}) 
\,\ = \,\ 
- \, \tau \,  e^{\frac{2\pi iz^2}{\tau}} \, 
[\psi_1^{(2)}+\psi_2^{(2)}](\tau, z)$
\item[{\rm (ii)}] \quad $[\psi_1^{(2)}-\psi_2^{(2)}]
(-\frac{1}{\tau}, \frac{z}{\tau}) 
\,\ = \,\ 
- \, \tau \,  e^{\frac{2\pi iz^2}{\tau}} \, \psi_3^{(2)}(\tau, z)$
\item[{\rm (iii)}] \quad $\psi_3^{(2)}(-\frac{1}{\tau}, \frac{z}{\tau}) 
\hspace{13.5mm}
\,\ = \,\ 
- \, \tau \,  e^{\frac{2\pi iz^2}{\tau}} \, 
[\psi_1^{(2)}-\psi_2^{(2)}](\tau, z)$
\end{enumerate}
\item[{\rm 3)}] 
\begin{enumerate}
\item[{\rm (i)}] \quad $[\psi_1^{(2)}+\psi_2^{(2)}](\tau+1,z)
\,\ = \,\ 
e^{\frac{\pi i}{4}} \, [\psi_1^{(2)}-\psi_2^{(2)}](\tau,z) $
\item[{\rm (ii)}] \quad $[\psi_1^{(2)}-\psi_2^{(2)}](\tau+1,z)
\,\ = \,\ 
e^{\frac{\pi i}{4}} \, [\psi_1^{(2)}+\psi_2^{(2)}](\tau,z) $
\item[{\rm (iii)}] \quad $\psi_3^{(2)}(\tau+1,z) \hspace{13.5mm}
\,\ = \,\ i \, \psi_3^{(2)}(\tau,z) $
\end{enumerate}
\end{enumerate}
\end{note}

\medskip

\begin{rem}
\label{vanish:rem:2024-801b}
The modular transformation properties of $\psi^{(1)}_i$'s 
in Note \ref{vanish:note:2024-801a} are quite similar with 
those of $\psi^{(2)}_i$'s in Note \ref{vanish:note:2024-801b}
except for only difference between the factors 
$e^{\frac{4\pi iz^2}{\tau}}$ and $e^{\frac{2\pi iz^2}{\tau}}$.
\end{rem}

\section{Example $\sim$ $D_4$}
\label{sec:ex:D4}


We consider the simple Lie algebra $D_4$ 
with the Dynkin diagram \hspace{-5mm}
\setlength{\unitlength}{1mm}
\begin{picture}(25,11)
\put(5,11){\circle{3}}
\put(16,11){\circle{3}}
\put(27,11){\circle{3}}
\put(16,0){\circle{3}}
\put(6.5,11){\line(1,0){8}}
\put(17.5,11){\line(1,0){8}}
\put(16,9.5){\line(0,-1){8}}
\put(5,15){\makebox(0,0){$\alpha_1$}}
\put(16,15){\makebox(0,0){$\alpha_2$}}
\put(27,15){\makebox(0,0){$\alpha_3$}}
\put(13,-3){\makebox(0,0){$\alpha_4$}} 
\end{picture} 

\medskip

\noindent
and inner product $\big((\alpha_i|\alpha_j)\big)_{i,j=1,2,3,4} 
=
\begin{pmatrix}
2 & -1 & 0 & 0 \\
-1 & 2 & -1 & -1 \\
0 & -1 & 2 & 0 \\
0 & -1 & 0 & 2
\end{pmatrix}$.
The highest root is $\theta= \alpha_1+2\alpha_2+\alpha_3+\alpha_4$,
and the dual Coxeter number is $h^{\vee}=6$. 
The central charge of the quantum Hamiltonian reduction of a highest 
weight $\widehat{D}_4$-module of level $K$ with respect to the 
$sl_2$-triple $(x,e,f)$, where $f=e_{-\theta}$ and $x=\frac12 \theta$, 
is given by 
\begin{equation}
c(K) \, = \, - \, \dfrac{6(K-1)(K+2)}{K+6} \, .
\label{vanish:eqn:2024-803a}
\end{equation}
We define the coordinates in the Cartan subalgebra $\hhh$ of $\widehat{D}_4$
by
\begin{equation}
2\pi i\Big(-\tau\Lambda_0 
\, + \, z_1 \, \frac{\alpha_1}{2}
\, + \, z_2 \, \frac{\theta}{2}
\, + \, z_3 \, \frac{\alpha_3}{2}
\, + \, z_4 \, \frac{\alpha_4}{2}
\, + \, t\delta\Big) \,\ = \,\ (\tau, z_1,z_2,z_3, z_4,t)
\label{vanish:eqn:2024-803b}
\end{equation}
Then the space $\overline{\hhh}^f$ is written as follows:
\begin{equation}
\overline{\hhh}^f \, = \, \Big\{
H \, = \, 
z_1 \, \frac{\alpha_1}{2}
\, + \, z_3 \, \frac{\alpha_3}{2}
\, + \, z_4 \, \frac{\alpha_4}{2} \quad ; \quad 
z_1, \, z_2, \, z_3 \, \in \, \ccc\Big\}
\label{vanish:eqn:2024-803c}
\end{equation}

\subsection{The case $K=-1$}
\label{subsec:D4:K=-1}


In this section, we consider the functions $f_{ijk}^{(\pm)}$ \, 
and \, $g_{ijk}$ \,\ $(i,j,k \, \in \, \{0,1\} )$ \, defined by 
{\allowdisplaybreaks
\begin{eqnarray*}
f_{000}^{(\pm)}(\tau, z_1,z_3,z_4) &:=&
{\rm ch}_{\Lambda_0}^{(1)} \,\ 
{\rm ch}_{\Lambda_0}^{(3)} \,\ 
{\rm ch}_{\Lambda_0}^{(4)} \,\ 
\chi^{(5,3)}_{1,1}
\,\ \pm \,\ 
{\rm ch}_{\Lambda_1}^{(1)} \,\ 
{\rm ch}_{\Lambda_1}^{(3)} \,\ 
{\rm ch}_{\Lambda_1}^{(4)} \,\ 
\chi^{(5,3)}_{2,1}
\\[2mm]
f_{100}^{(\pm)}(\tau, z_1,z_3,z_4) &:=&
{\rm ch}_{\Lambda_0}^{(1)} \,\ 
{\rm ch}_{\Lambda_1}^{(3)} \,\ 
{\rm ch}_{\Lambda_1}^{(4)} \,\ 
\chi^{(5,3)}_{1,1}
\,\ \pm \,\ 
{\rm ch}_{\Lambda_1}^{(1)} \,\ 
{\rm ch}_{\Lambda_0}^{(3)} \,\ 
{\rm ch}_{\Lambda_0}^{(4)} \,\ 
\chi^{(5,3)}_{2,1}
\\[2mm]
f_{010}^{(\pm)}(\tau, z_1,z_3,z_4) &:=&
{\rm ch}_{\Lambda_1}^{(1)} \,\ 
{\rm ch}_{\Lambda_0}^{(3)} \,\ 
{\rm ch}_{\Lambda_1}^{(4)} \,\ 
\chi^{(5,3)}_{1,1}
\,\ \pm \,\ 
{\rm ch}_{\Lambda_0}^{(1)} \,\ 
{\rm ch}_{\Lambda_1}^{(3)} \,\ 
{\rm ch}_{\Lambda_0}^{(4)} \,\ 
\chi^{(5,3)}_{2,1}
\\[2mm]
f_{001}^{(\pm)}(\tau, z_1,z_3,z_4) &:=&
{\rm ch}_{\Lambda_1}^{(1)} \,\ 
{\rm ch}_{\Lambda_1}^{(3)} \,\ 
{\rm ch}_{\Lambda_0}^{(4)} \,\ 
\chi^{(5,3)}_{1,1}
\,\ \pm \,\ 
{\rm ch}_{\Lambda_0}^{(1)} \,\ 
{\rm ch}_{\Lambda_0}^{(3)} \,\ 
{\rm ch}_{\Lambda_1}^{(4)} \,\ 
\chi^{(5,3)}_{2,1}
\\[2mm]
f_{011}^{(\pm)}(\tau, z_1,z_3,z_4) &:=&
{\rm ch}_{\Lambda_1}^{(1)} \,\ 
{\rm ch}_{\Lambda_0}^{(3)} \,\ 
{\rm ch}_{\Lambda_0}^{(4)} \,\ 
\chi^{(5,3)}_{2,3}
\,\ \pm \,\ 
{\rm ch}_{\Lambda_0}^{(1)} \,\ 
{\rm ch}_{\Lambda_1}^{(3)} \,\ 
{\rm ch}_{\Lambda_1}^{(4)} \,\ 
\chi^{(5,3)}_{2,2}
\\[2mm]
f_{101}^{(\pm)}(\tau, z_1,z_3,z_4) &:=&
{\rm ch}_{\Lambda_0}^{(1)} \,\ 
{\rm ch}_{\Lambda_1}^{(3)} \,\ 
{\rm ch}_{\Lambda_0}^{(4)} \,\ 
\chi^{(5,3)}_{2,3}
\,\ \pm \,\ 
{\rm ch}_{\Lambda_1}^{(1)} \,\ 
{\rm ch}_{\Lambda_0}^{(3)} \,\ 
{\rm ch}_{\Lambda_1}^{(4)} \,\ 
\chi^{(5,3)}_{2,2}
\\[2mm]
f_{110}^{(\pm)}(\tau, z_1,z_3,z_4) &:=&
{\rm ch}_{\Lambda_0}^{(1)} \,\ 
{\rm ch}_{\Lambda_0}^{(3)} \,\ 
{\rm ch}_{\Lambda_1}^{(4)} \,\ 
\chi^{(5,3)}_{2,3}
\,\ \pm \,\ 
{\rm ch}_{\Lambda_1}^{(1)} \,\ 
{\rm ch}_{\Lambda_1}^{(3)} \,\ 
{\rm ch}_{\Lambda_0}^{(4)} \,\ 
\chi^{(5,3)}_{2,2}
\\[2mm]
f_{111}^{(\pm)}(\tau, z_1,z_3,z_4) &:=&
{\rm ch}_{\Lambda_1}^{(1)} \,\ 
{\rm ch}_{\Lambda_1}^{(3)} \,\ 
{\rm ch}_{\Lambda_1}^{(4)} \,\ 
\chi^{(5,3)}_{2,3}
\,\ \pm \,\ 
{\rm ch}_{\Lambda_0}^{(1)} \,\ 
{\rm ch}_{\Lambda_0}^{(3)} \,\ 
{\rm ch}_{\Lambda_0}^{(4)} \,\ 
\chi^{(5,3)}_{2,2}
\end{eqnarray*}}
and
{\allowdisplaybreaks
\begin{eqnarray*}
g_{000}(\tau, z_1,z_3,z_4) &:=&
{\rm ch}_{\Lambda_1}^{(1)} \,\ 
{\rm ch}_{\Lambda_1}^{(3)} \,\ 
{\rm ch}_{\Lambda_1}^{(4)} \,\ 
\chi^{(5,3)}_{1,1}
\,\ - \,\ 
{\rm ch}_{\Lambda_0}^{(1)} \,\ 
{\rm ch}_{\Lambda_0}^{(3)} \,\ 
{\rm ch}_{\Lambda_0}^{(4)} \,\ 
\chi^{(5,3)}_{2,1}
\\[2mm]
g_{100}(\tau, z_1,z_3,z_4) &:=&
{\rm ch}_{\Lambda_1}^{(1)} \,\ 
{\rm ch}_{\Lambda_0}^{(3)} \,\ 
{\rm ch}_{\Lambda_0}^{(4)} \,\ 
\chi^{(5,3)}_{1,1}
\,\ - \,\ 
{\rm ch}_{\Lambda_0}^{(1)} \,\ 
{\rm ch}_{\Lambda_1}^{(3)} \,\ 
{\rm ch}_{\Lambda_1}^{(4)} \,\ 
\chi^{(5,3)}_{2,1}
\\[2mm]
g_{010}(\tau, z_1,z_3,z_4) &:=&
{\rm ch}_{\Lambda_0}^{(1)} \,\ 
{\rm ch}_{\Lambda_1}^{(3)} \,\ 
{\rm ch}_{\Lambda_0}^{(4)} \,\ 
\chi^{(5,3)}_{1,1}
\,\ - \,\ 
{\rm ch}_{\Lambda_1}^{(1)} \,\ 
{\rm ch}_{\Lambda_0}^{(3)} \,\ 
{\rm ch}_{\Lambda_1}^{(4)} \,\ 
\chi^{(5,3)}_{2,1}
\\[2mm]
g_{001}(\tau, z_1,z_3,z_4) &:=&
{\rm ch}_{\Lambda_0}^{(1)} \,\ 
{\rm ch}_{\Lambda_0}^{(3)} \,\ 
{\rm ch}_{\Lambda_1}^{(4)} \,\ 
\chi^{(5,3)}_{1,1}
\,\ - \,\ 
{\rm ch}_{\Lambda_1}^{(1)} \,\ 
{\rm ch}_{\Lambda_1}^{(3)} \,\ 
{\rm ch}_{\Lambda_0}^{(4)} \,\ 
\chi^{(5,3)}_{2,1}
\\[2mm]
g_{011}(\tau, z_1,z_3,z_4) &:=&
{\rm ch}_{\Lambda_0}^{(1)} \,\ 
{\rm ch}_{\Lambda_1}^{(3)} \,\ 
{\rm ch}_{\Lambda_1}^{(4)} \,\ 
\chi^{(5,3)}_{2,3}
\,\ - \,\ 
{\rm ch}_{\Lambda_1}^{(1)} \,\ 
{\rm ch}_{\Lambda_0}^{(3)} \,\ 
{\rm ch}_{\Lambda_0}^{(4)} \,\ 
\chi^{(5,3)}_{2,2}
\\[2mm]
g_{101}(\tau, z_1,z_3,z_4) &:=&
{\rm ch}_{\Lambda_1}^{(1)} \,\ 
{\rm ch}_{\Lambda_0}^{(3)} \,\ 
{\rm ch}_{\Lambda_1}^{(4)} \,\ 
\chi^{(5,3)}_{2,3}
\,\ - \,\ 
{\rm ch}_{\Lambda_0}^{(1)} \,\ 
{\rm ch}_{\Lambda_1}^{(3)} \,\ 
{\rm ch}_{\Lambda_0}^{(4)} \,\ 
\chi^{(5,3)}_{2,2}
\\[2mm]
g_{110}(\tau, z_1,z_3,z_4) &:=&
{\rm ch}_{\Lambda_1}^{(1)} \,\ 
{\rm ch}_{\Lambda_1}^{(3)} \,\ 
{\rm ch}_{\Lambda_0}^{(4)} \,\ 
\chi^{(5,3)}_{2,3}
\,\ - \,\ 
{\rm ch}_{\Lambda_0}^{(1)} \,\ 
{\rm ch}_{\Lambda_0}^{(3)} \,\ 
{\rm ch}_{\Lambda_1}^{(4)} \,\ 
\chi^{(5,3)}_{2,2}
\\[2mm]
g_{111}(\tau, z_1,z_3,z_4) &:=&
{\rm ch}_{\Lambda_0}^{(1)} \,\ 
{\rm ch}_{\Lambda_0}^{(3)} \,\ 
{\rm ch}_{\Lambda_0}^{(4)} \,\ 
\chi^{(5,3)}_{2,3}
\,\ - \,\ 
{\rm ch}_{\Lambda_1}^{(1)} \,\ 
{\rm ch}_{\Lambda_1}^{(3)} \,\ 
{\rm ch}_{\Lambda_1}^{(4)} \,\ 
\chi^{(5,3)}_{2,2}
\end{eqnarray*}}
where \, ${\rm ch}_{\lambda}^{(\ell)}:=
{\rm ch}_{L(\lambda ; \, A^{(1)}_1)}(\tau, z_{\ell})$ \,\ 
$(\ell=1,3,4)$ \, is the normalized character of the irreducible 
$A^{(1)}_1$-module $L(\lambda)$ and $\chi^{(p,q)}_{r,s}$ is 
the normalized character of the irreducible Virasoro module 
with the central charge $z^{(p,q)}= 1-\dfrac{6(p-q)^2}{pq}$
and the vacuum anomaly $h^{(p,q)}_{r,s}=
\dfrac{(rp-sq)^2-(p-q)^2}{4pq}$.
The modular transformation properties of these functions 
are easily computed by using transformation properties 
of $A^{(1)}_1$-characters and Virasoro characters 
given in \cite{KW4} and \cite{W2001} and \cite{W2},
and obtained as follows:

\medskip

\begin{lemma} 
\label{vanish:lemma:2024-801a}
We put \, 
$a := \dfrac{1}{\sqrt{5}} \, \sin \dfrac{2\pi}{5}$, \, 
$b := \dfrac{1}{\sqrt{5}} \, \sin \dfrac{\pi}{5}$ \, and
{\allowdisplaybreaks
\begin{eqnarray*}
& &
J \, := \, \left(
\begin{array}{rrrr}
1 & 1 & 1 & 1 \\[0.5mm]
1 & 1 & -1 & -1 \\[0.5mm]
1 & -1 & 1 & -1 \\[0.5mm]
1 & -1 & -1 & 1 
\end{array} \right) 
\\[2mm]
& &
A \, := \, a \, J \, = \, \left(
\begin{array}{rrrr}
a & a & a & a \\[0.5mm]
a & a & -a & -a \\[0.5mm]
a & -a & a & -a \\[0.5mm]
a & -a & -a & a 
\end{array}\right) \, ,
\hspace{10mm} %
B \, := \, b \, J \, = \, \left(
\begin{array}{rrrr}
b & b & b & b \\[0.5mm]
b & b & -b & -b \\[0.5mm]
b & -b & b & -b \\[0.5mm]
b & -b & -b & b 
\end{array}\right)
\end{eqnarray*}}
Then the $S$-transformation of $f_{ijk}^{(\pm)}$ and $g_{ijk}$ 
are as follows:
{\allowdisplaybreaks
\begin{eqnarray*}
& & \hspace{-5mm}
\left(
\begin{array}{r}
f_{000}^{(+)}\big|_S \\[1mm]
f_{100}^{(+)}\big|_S \\[1mm]
f_{010}^{(+)}\big|_S \\[1mm]
f_{001}^{(+)}\big|_S \\[1mm]
f_{111}^{(+)}\big|_S \\[1mm]
f_{011}^{(+)}\big|_S \\[1mm]
f_{101}^{(+)}\big|_S \\[1mm]
f_{110}^{(+)}\big|_S
\end{array}\right)
\, = \, \left(
\begin{array}{r|r}
A & B \\[1mm]
\hline 
-B & A
\end{array}\right)
\left(
\begin{array}{r}
g_{\, 000} \\[1mm]
g_{\, 100} \\[1mm]
g_{\, 010} \\[1mm]
g_{\, 001} \\[1mm]
g_{\, 111} \\[1mm]
g_{\, 011} \\[1mm]
g_{\, 101}\\[1mm]
g_{\, 110}
\end{array}\right) \, , \hspace{10mm} %
\left(
\begin{array}{r}
g_{\, 000}\big|_S \\[1mm]
g_{\, 100}\big|_S \\[1mm]
g_{\, 010}\big|_S \\[1mm]
g_{\, 001}\big|_S \\[1mm]
g_{\, 111}\big|_S \\[1mm]
g_{\, 011}\big|_S \\[1mm]
g_{\, 101}\big|_S \\[1mm]
g_{\, 110}\big|_S
\end{array}\right)
\, = \, \left(
\begin{array}{r|r}
A & -B \\[1mm]
\hline 
B & A
\end{array}\right)
\left(
\begin{array}{r}
f_{000}^{(+)} \\[1mm]
f_{100}^{(+)} \\[1mm]
f_{010}^{(+)} \\[1mm]
f_{001}^{(+)} \\[1mm]
f_{111}^{(+)} \\[1mm]
f_{011}^{(+)} \\[1mm]
f_{101}^{(+)} \\[1mm]
f_{110}^{(+)}
\end{array}\right)
\\[2mm]
& & \hspace{-5mm}
\left(
\begin{array}{r}
f_{000}^{(-)}\big|_S \\[1mm]
f_{100}^{(-)}\big|_S \\[1mm]
f_{010}^{(-)}\big|_S \\[1mm]
f_{001}^{(-)}\big|_S \\[1mm]
f_{111}^{(-)}\big|_S \\[1mm]
f_{011}^{(-)}\big|_S \\[1mm]
f_{101}^{(-)}\big|_S \\[1mm]
f_{110}^{(-)}\big|_S
\end{array}\right)
\, = \, \left(
\begin{array}{r|r}
A & B \\[1mm]
\hline 
B & -A
\end{array}\right)
\left(
\begin{array}{r}
f_{000}^{(-)} \\[1mm]
f_{100}^{(-)} \\[1mm]
f_{010}^{(-)} \\[1mm]
f_{001}^{(-)} \\[1mm]
f_{111}^{(-)} \\[1mm]
f_{011}^{(-)} \\[1mm]
f_{101}^{(-)} \\[1mm]
f_{110}^{(-)}
\end{array}\right)
\end{eqnarray*}}
\end{lemma}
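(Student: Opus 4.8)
The plan is to prove the lemma by brute substitution of the known modular data of the two kinds of building blocks, followed by a regrouping governed by a Walsh--Hadamard (discrete Fourier on $(\zzz/2)^3$) structure coming from the three ``legs'' $\alpha_1,\alpha_3,\alpha_4$ of the $D_4$ diagram. Recall that each of $f^{(\pm)}_{ijk}$ and $g_{ijk}$ is a sum of two monomials, each monomial being a product of three normalized level-one $A^{(1)}_1$-characters $\mathrm{ch}^{(\ell)}_{\Lambda_{i_\ell}}$, $\ell\in\{1,3,4\}$ (carrying the dependence on $z_\ell$), times one normalized $(p,q)=(5,3)$ Virasoro minimal-model character; the two monomials have complementary leg-patterns and carry the two Virasoro characters of one of the two blocks $\{\chi^{(5,3)}_{1,1},\chi^{(5,3)}_{2,1}\}$ or $\{\chi^{(5,3)}_{2,2},\chi^{(5,3)}_{2,3}\}$, while the sign in $f^{(\pm)}_{ijk}$ and the pairing of leg-pattern with Virasoro label encode the eight labels $ijk$. (The tensor-product shape is of course forced by $c(-1)=\tfrac{12}{5}=3\cdot1+(-\tfrac35)$ in \eqref{vanish:eqn:2024-803a}, but the proof uses only the two component transformation laws.)

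First I would record the component $S$-matrices. From \cite{KW4}, each leg transforms as
$$
\begin{pmatrix}\mathrm{ch}^{(\ell)}_{\Lambda_0}\\[1mm]\mathrm{ch}^{(\ell)}_{\Lambda_1}\end{pmatrix}\Big|_S
\;=\;\frac1{\sqrt2}\begin{pmatrix}1&1\\1&-1\end{pmatrix}\begin{pmatrix}\mathrm{ch}^{(\ell)}_{\Lambda_0}\\[1mm]\mathrm{ch}^{(\ell)}_{\Lambda_1}\end{pmatrix},
$$
up to the Jacobi automorphy factor in $z_\ell$. From \cite{W2001} and \cite{W2} (or the general minimal-model $S$-matrix), the four characters $\chi^{(5,3)}_{1,1},\chi^{(5,3)}_{2,1},\chi^{(5,3)}_{2,2},\chi^{(5,3)}_{2,3}$, which after the Kac-table identifications are all the primaries, transform by a $4\times4$ matrix whose entries, once multiplied by the $(\tfrac1{\sqrt2})^3$ coming from the three legs and by the factor $2$ produced by combining the two monomials, reduce to $\pm a$ or $\pm b$ with $a,b$ as in the statement: here $\sin\tfrac{2\pi}5$ and $\sin\tfrac\pi5$ are exactly the two distinct values of $|\sin\tfrac{k\pi}5|$, the ``$p=3$-direction'' sine contributing the constant $\pm\tfrac{\sqrt3}2$ in all cases, and the dichotomy $a$ versus $b$ is governed by whether the product of the two $s$-labels is $\equiv\pm1$ or $\equiv\pm2\pmod5$. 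In particular this $4\times4$ matrix has the block shape prescribed by the splitting $\{\chi_{1,1},\chi_{2,1}\}\,|\,\{\chi_{2,2},\chi_{2,3}\}$, with $a$-type entries inside each block and $b$-type entries between the blocks.

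Next I would substitute and collect. Applying $S$ to a monomial of $f^{(\pm)}_{ijk}$ turns it into $\tfrac1{2\sqrt2}$ times a sum over the $2^3$ leg-patterns $\vec\jmath$, with the Walsh sign $(-1)^{|\vec\jmath|}$ on the monomial that started from $(\Lambda_1,\Lambda_1,\Lambda_1)$-type entries, tensored with the $4\times4$ Virasoro image. Adding the two monomials of $f^{(\pm)}_{ijk}$ then makes this Walsh sign cut the $2^3$-sum down to the even-weight or the odd-weight leg-patterns (the choice being dictated by the sign $\pm$ in $f^{(\pm)}_{ijk}$ together with the internal signs of the Virasoro $S$-matrix), and the surviving sum is exactly a ``row'' of the $J$-pattern applied to the $g$'s (for the $(+)$-family) or to the $f^{(-)}$'s (for the $(-)$-family): a sum over the four even- or odd-weight leg-patterns paired with the appropriate Virasoro characters with a built-in relative minus sign is precisely $\sum g_{ijk}$ over the four labels in a block (resp.\ $\sum f^{(-)}_{ijk}$). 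Tracking which Virasoro block one lands in, same block giving an $a$-coefficient, the other block a $b$-coefficient, then yields the claimed $8\times8$ matrices $\bigl(\begin{smallmatrix}A&B\\-B&A\end{smallmatrix}\bigr)$, $\bigl(\begin{smallmatrix}A&-B\\B&A\end{smallmatrix}\bigr)$, $\bigl(\begin{smallmatrix}A&B\\B&-A\end{smallmatrix}\bigr)$ with $A=aJ$, $B=bJ$; the three Jacobi automorphy factors combine into a single common one in $z_1,z_3,z_4$, absorbed in $\big|_S$ uniformly for all eight functions.

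The computation is mechanical, so the only genuinely delicate part is the sign bookkeeping, namely verifying that the Walsh sign $(-1)^{|\vec\jmath|}$, the sign factor $(-1)^{1+r\sigma+s\rho}$ and the sign of $\sin\tfrac{5\pi r\rho}{3}$ in the Virasoro $S$-matrix, and the $\pm$ in the definition of $f^{(\pm)}_{ijk}$ conspire to send the $(+)$-family into the $g$-family with off-diagonal block $-B$, and the $(-)$-family into itself with off-diagonal block $+B$ and diagonal blocks $A,-A$, rather than into some other sign pattern. I would organize this by computing a representative of each block-row (say, the images of $f^{(+)}_{000}, f^{(+)}_{111}, f^{(-)}_{000}, f^{(-)}_{111}$ and $g_{000}, g_{111}$) and invoking the $(\zzz/2)^3$-symmetry to fill in the rest, using as a consistency check that composing $S$ twice sends $f^{(+)}$ back to $f^{(+)}$, i.e.\ $\bigl(\begin{smallmatrix}A&B\\-B&A\end{smallmatrix}\bigr)\bigl(\begin{smallmatrix}A&-B\\B&A\end{smallmatrix}\bigr)=(A^2+B^2)\oplus(A^2+B^2)=\mathrm{id}$; this holds because $J^2=4\,\mathrm{id}$ and $\sin^2\tfrac{2\pi}5+\sin^2\tfrac\pi5=\tfrac54$, i.e.\ $a^2+b^2=\tfrac14$, and it pins down the relative signs uniquely.
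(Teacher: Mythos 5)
Your proposal is correct and follows essentially the same route as the paper, which offers no written proof of this lemma beyond the remark that it is ``easily computed'' from the transformation properties of the level-one $A^{(1)}_1$ characters and the $(5,3)$ Virasoro minimal-model characters in \cite{KW4}, \cite{W2001}, \cite{W2}; your Walsh--Hadamard bookkeeping and the $S^2=\mathrm{id}$ check (via $a^2+b^2=\tfrac14$, $J^2=4\,\mathrm{id}$) simply make that routine computation explicit.
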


\medskip

\begin{lemma}  
\label{vanish:lemma:2024-801b}
The $T$-transformation of $f_{ijk}^{(\pm)}$ and $g_{ijk}$ are 
as follows:
{\allowdisplaybreaks
\begin{eqnarray*}
f^{(\pm)}_{ijk}\big|_T &=& \left\{
\begin{array}{rcl}
e^{-\frac{\pi i}{5}} \, f^{(\mp)}_{000} & & {\rm if} \quad 
(i,j,k) \, = \, (000)
\\[2mm]
e^{\frac{4\pi i}{5}} \, f^{(\mp)}_{ijk} & & {\rm if} \quad 
(i,j,k) \, = \, (100), \, (010), \, (001)
\\[2mm]
e^{\frac{\pi i}{5}} \, f^{(\mp)}_{ijk} & & {\rm if} \quad 
(i,j,k) \, = \, (011), \, (101), \, (110)
\\[2mm]
- \, e^{\frac{\pi i}{5}} \, f^{(\mp)}_{111} & & {\rm if} \quad 
(i,j,k) \, = \, (111)
\end{array}\right.
\\[2mm]
g_{\, ijk}\big|_T &=& \left\{
\begin{array}{rcl}
- \, e^{\frac{3\pi i}{10}} \, g_{\, 000} & & {\rm if} \quad 
(i,j,k) \, = \, (000)
\\[2mm]
e^{\frac{3\pi i}{10}} \, g_{\, ijk} & & {\rm if} \quad 
(i,j,k) \, = \, (100), \, (010), \, (001)
\\[2mm]
e^{\frac{7\pi i}{10}} \, g_{\, ijk} & & {\rm if} \quad 
(i,j,k) \, = \, (011), \, (101), \, (110)
\\[2mm]
- \, e^{\frac{7\pi i}{10}} \, g_{\, 111} & & {\rm if} \quad 
(i,j,k) \, = \, (111)
\end{array}\right.
\end{eqnarray*}}
\end{lemma}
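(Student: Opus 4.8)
The plan is to apply the $T$-transformation $\tau\mapsto\tau+1$ (with $z_1,z_3,z_4$ held fixed) separately to each tensor factor occurring in the $f^{(\pm)}_{ijk}$ and $g_{ijk}$, and then collect the resulting phases. First I would record two ingredients. A normalized character whose modular anomaly is $h-c/24$ (conformal weight $h$, central charge $c$) is multiplied under $T$ by $e^{2\pi i(h-c/24)}$, for which I would cite the transformation formulas in \cite{KW4}, \cite{W2001}, \cite{W2}. For $A^{(1)}_1$ at level $1$ the central charge is $1$ and the conformal weights of $L(\Lambda_0)$ and $L(\Lambda_1)$ are $0$ and $\tfrac14$, so ${\rm ch}^{(\ell)}_{\Lambda_0}\big|_T=e^{-\pi i/12}\,{\rm ch}^{(\ell)}_{\Lambda_0}$ and ${\rm ch}^{(\ell)}_{\Lambda_1}\big|_T=e^{5\pi i/12}\,{\rm ch}^{(\ell)}_{\Lambda_1}$ for $\ell=1,3,4$, the second phase being $i$ times the first. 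For the $(5,3)$ Virasoro minimal model the central charge is $z^{(5,3)}=-\tfrac35$, so $-z^{(5,3)}/24=\tfrac1{40}$, while $h^{(5,3)}_{1,1}=0$, $h^{(5,3)}_{2,1}=\tfrac34$, $h^{(5,3)}_{2,2}=\tfrac15$, $h^{(5,3)}_{2,3}=-\tfrac1{20}$; hence under $T$ the characters $\chi^{(5,3)}_{1,1}$, $\chi^{(5,3)}_{2,1}$, $\chi^{(5,3)}_{2,2}$, $\chi^{(5,3)}_{2,3}$ are multiplied by $e^{\pi i/20}$, $e^{31\pi i/20}$, $e^{9\pi i/20}$, $e^{-\pi i/20}$ respectively.

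Next I would observe that a monomial ${\rm ch}^{(1)}_{\Lambda_{a_1}}{\rm ch}^{(3)}_{\Lambda_{a_3}}{\rm ch}^{(4)}_{\Lambda_{a_4}}\chi^{(5,3)}_{r,s}$, with $a_\ell\in\{0,1\}$, is multiplied under $T$ by $e^{-\pi i/4}\,i^{\,m}\,e^{2\pi i(h^{(5,3)}_{r,s}-z^{(5,3)}/24)}$, where $m=\#\{\ell:a_\ell=1\}$. In each $f^{(\pm)}_{ijk}$ the two monomials have their three $A^{(1)}_1$-labels interchanged, so $m$ is replaced by $3-m$, and the key point to verify is that with the prescribed pairing of Virasoro characters the two monomial phases differ by exactly $-1$, which turns $f^{(\pm)}_{ijk}$ into a scalar multiple of $f^{(\mp)}_{ijk}$. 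For instance, for $(ijk)=(000)$ the first monomial acquires $e^{-\pi i/4}e^{\pi i/20}=e^{-\pi i/5}$ and the second $e^{-\pi i/4}i^{3}e^{31\pi i/20}=-e^{-\pi i/5}$, giving $f^{(\pm)}_{000}\big|_T=e^{-\pi i/5}f^{(\mp)}_{000}$; the remaining six triples are treated the same way, the common pulled-out phase being $e^{4\pi i/5}$ for $(100),(010),(001)$, $e^{\pi i/5}$ for $(011),(101),(110)$, and $-e^{\pi i/5}$ for $(111)$. For the $g_{ijk}$ the first monomial is the corresponding $f$-monomial with $\Lambda_0\leftrightarrow\Lambda_1$ swapped in all three $A^{(1)}_1$-factors; this swap is exactly what makes the two monomial phases of $g_{ijk}$ coincide, so $g_{ijk}$ comes out a $T$-eigenfunction. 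For example, for $(000)$ the first monomial acquires $e^{-\pi i/4}i^{3}e^{\pi i/20}=-e^{3\pi i/10}$ and the second $e^{-\pi i/4}e^{31\pi i/20}=-e^{3\pi i/10}$, whence $g_{000}\big|_T=-e^{3\pi i/10}g_{000}$, and the other seven triples are identical in form.

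I do not expect a genuine obstacle here: once the three ingredients above are in place the argument is purely mechanical. The only thing requiring attention is the bookkeeping — fixing the normalization of $T$, tabulating the four weights $h^{(5,3)}_{r,s}$ correctly, tracking the three independent $i$-factors contributed by the $A^{(1)}_1$-characters in each monomial, and checking that in all eight triples $(i,j,k)$ the two monomials of $f^{(\pm)}_{ijk}$ differ by the phase $-1$ and those of $g_{ijk}$ by $+1$ — after which every one of the stated formulas follows by inspection.
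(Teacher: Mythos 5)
Your proposal is correct and follows exactly the route the paper indicates: the paper gives no written proof of this lemma, saying only that the $T$-transformation is ``easily computed'' from the standard transformation properties of the level-one $A^{(1)}_1$ characters and the $(5,3)$ Virasoro minimal-model characters, which is precisely the computation you carry out. Your tabulated weights ($h_{\Lambda_1}=\tfrac14$, $c=-\tfrac35$, $h^{(5,3)}_{1,1}=0$, $h^{(5,3)}_{2,1}=\tfrac34$, $h^{(5,3)}_{2,2}=\tfrac15$, $h^{(5,3)}_{2,3}=-\tfrac1{20}$) and the resulting monomial phases all check out, and they reproduce every case of the stated formulas.
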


\medskip

The characters of $D_4$-Deligne series of level $K=-1$ are 
written by these functions $f_{ijk}^{(+)}$'s as follows:

\medskip

\begin{prop} \,\ 
\label{vanish:prop:2024-801a}
\begin{enumerate}
\item[{\rm 1)}] \quad $\overset{w}{\rm ch}_{H(-\Lambda_0)}
\,\ = \hspace{3.5mm} f^{(+)}_{000}$
\item[{\rm 2)}] \quad $\overset{w}{\rm ch}_{H(\Lambda)}
\,\ = \,\ \left\{
\begin{array}{lcl}
f^{(+)}_{100} & &{\rm if} \quad \Lambda \, = \, 
-\Lambda_0+\overline{\Lambda}_3+\overline{\Lambda}_4
\\[2mm]
f^{(+)}_{010} & &{\rm if} \quad \Lambda \, = \, 
-\Lambda_0+\overline{\Lambda_1}+\overline{\Lambda_4}
\\[2mm]
f^{(+)}_{001} & &{\rm if} \quad \Lambda \, = \, 
-\Lambda_0+\overline{\Lambda}_1+\overline{\Lambda}_3
\end{array}\right. $
\item[{\rm 3)}] \quad $\overset{w}{\rm ch}_{H(\Lambda)}
\,\ = \,\ \left\{
\begin{array}{lcl}
f^{(+)}_{011} & &{\rm if} \quad \Lambda \, = \, 
-\Lambda_0+\overline{\Lambda}_1
\\[2mm]
f^{(+)}_{101} & &{\rm if} \quad \Lambda \, = \, 
-\Lambda_0+\overline{\Lambda}_3
\\[2mm]
f^{(+)}_{110} & &{\rm if} \quad \Lambda \, = \, 
-\Lambda_0+\overline{\Lambda}_4
\end{array}\right. $
\item[{\rm 4)}] \quad $\overset{w}{\rm ch}_{H(\Lambda)}
\,\ = \hspace{6.5mm} 
f^{(+)}_{111} \hspace{7.7mm} {\rm if} \quad
\Lambda \, = \, -\Lambda_0+\overline{\Lambda}_2$
\end{enumerate}
\end{prop}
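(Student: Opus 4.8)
The plan is to evaluate the numerator $A^{[\alpha_\Lambda]}_{\Lambda+\rho}\big(\tau,-\tfrac\tau2\theta+H,\tfrac\tau4\big)$ of each $\overset{w}{\rm ch}_{H(\Lambda)}$ explicitly and to recognise it, together with the reduced Weyl denominator of the $f=e_{-\theta}$ reduction, as the stated product of characters. First I would use Theorem 4.1 in \cite{KW2017d} to attach to each of the eight weights its root $\alpha_\Lambda\in\overline{\Delta}_+$ with $(\Lambda+\rho\,|\,\delta-\alpha_\Lambda)=0$. Since $K+h^\vee=5$, $x=\tfrac12\theta$ and $\tfrac\tau2|x|^2=\tfrac\tau4$, a short computation of root heights gives $\alpha_{-\Lambda_0}=\theta$; $\alpha_\Lambda=\alpha_2+\alpha_3+\alpha_4,\ \alpha_1+\alpha_2+\alpha_4,\ \alpha_1+\alpha_2+\alpha_3$ for $\Lambda=-\Lambda_0+\overline\Lambda_3+\overline\Lambda_4,\ -\Lambda_0+\overline\Lambda_1+\overline\Lambda_4,\ -\Lambda_0+\overline\Lambda_1+\overline\Lambda_3$; and $\alpha_\Lambda=\alpha_1+\alpha_2+\alpha_3+\alpha_4$ for the remaining four weights (the three $-\Lambda_0+\overline\Lambda_\ell$, $\ell\in\{1,3,4\}$, and $-\Lambda_0+\overline\Lambda_2$). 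With $H=z_1\tfrac{\alpha_1}2+z_3\tfrac{\alpha_3}2+z_4\tfrac{\alpha_4}2\in\overline{\hhh}^f$, I then apply Theorem \ref{vanish:thm:2024-704a} with $\beta=\theta$ (so $(\theta|x)=1$ and $\theta^\vee=\theta$), reducing the numerator to $\tfrac14\sum_{w\in\overline W}\varepsilon(w)\,(\theta\,|\,w\alpha_\Lambda)\,f_{\Lambda,x,w}(\tau,H)$; by $\overline W$-invariance of the form one has $f_{\Lambda,x,w}=F\big(w(\overline{\Lambda+\rho})\big)$ with
$$
F(\xi)\ :=\ \sum_{\gamma\in\overline{Q}^\vee}e^{2\pi i(\xi+5\gamma-\frac52\theta\,|\,H)}\,q^{\frac1{10}\,|\xi+5\gamma-\frac52\theta|^2}.
$$

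The core step is the factorisation of $F$. In $D_4$ the vectors $\alpha_1,\alpha_3,\alpha_4,\theta$ are pairwise orthogonal of square length $2$ and span $\overline{\hhh}$, so $L_0:=\zzz\alpha_1\oplus\zzz\alpha_3\oplus\zzz\alpha_4\oplus\zzz\theta$ is a sublattice of $\overline{Q}^\vee$ of index $2$, with $\overline{Q}^\vee=L_0\sqcup(L_0+\alpha_2)$ and $\alpha_2=\tfrac12(\theta-\alpha_1-\alpha_3-\alpha_4)$. Decomposing $\gamma$ and $\xi$ along this orthogonal frame and using $(\theta\,|\,H)=0$, the sum $F(\xi)$ splits, for each gluing class $\epsilon\in\{0,1\}$, into a product of three one-dimensional theta sums in $z_1,z_3,z_4$ (the $\alpha_\ell$-directions) and one one-dimensional theta sum in $\tau$ only (the $\theta$-direction). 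After multiplying by the reduced Weyl denominator, the three $z_\ell$-sums become, up to $\eta$-normalisation, the level-one $A^{(1)}_1$-characters ${\rm ch}^{(\ell)}_{\Lambda_0}$ or ${\rm ch}^{(\ell)}_{\Lambda_1}$, selected by $(\overline{\Lambda+\rho}\,|\,\alpha_\ell)$ mod $5$ and by the class $\epsilon$, while the sum over $\epsilon$ of the two $\tau$-sums is a Rocha--Caridi character $\chi^{(5,3)}_{r,s}$ of the $(p,q)=(5,3)$ Virasoro minimal model, with $(r,s)\in\{(1,1),(2,1),(2,2),(2,3)\}$. As a consistency check the central charges add up, $3\cdot 1+(-\tfrac35)=\tfrac{12}5=c(-1)$ by \eqref{vanish:eqn:2024-803a}; conceptually this reflects the conformal decomposition of the minimal $W$-algebra $W^{-1}(D_4,f_\theta)$ over $L_1(A^{(1)}_1)^{\otimes 3}$ tensored with the $(5,3)$ Virasoro vertex algebra.

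It then remains to carry out the sum over $\overline W(D_4)$. The reflections $r_{\alpha_1},r_{\alpha_3},r_{\alpha_4}$ fix $\overline{\hhh}^f$ and $\theta$, and, exactly as in Lemma \ref{vanish:lemma:2024-706a}, the element $r_{\alpha_2}$ translates the lattice sum by a coroot; running the argument in the proof of Theorem \ref{vanish:thm:2024-801a} on each $r_{\alpha_2}$-pair of Weyl elements, the surviving contribution is the \emph{symmetric} combination of the two classes $\epsilon=0,1$, that is the $f^{(+)}_{ijk}$ — not the $f^{(-)}_{ijk}$, and not the $g_{ijk}$ — with the triple $(i,j,k)\in\{0,1\}^3$ and the attached Virasoro label $(r,s)$ read off from $\overline{\Lambda+\rho}$ modulo $5\overline{Q}^\vee+\tfrac12\theta$. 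Going through the eight weights with the roots $\alpha_\Lambda$ listed above then yields precisely the correspondence 1)--4).

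I expect the main obstacle to be exactly this last bookkeeping: tracking the signs produced in the $\overline W$-sum and verifying which ${\rm ch}^{(\ell)}_{\Lambda_0},{\rm ch}^{(\ell)}_{\Lambda_1}$ factors pair with which $\chi^{(5,3)}_{r,s}$ in each of the eight cases. A convenient shortcut for that step is to prove only that every $\overset{w}{\rm ch}_{H(\Lambda)}$ lies in the span of $\{f^{(\pm)}_{ijk},g_{ijk}\}$, to compute the seed case $\Lambda=-\Lambda_0$ directly (obtaining $f^{(+)}_{000}$), and then to fix the remaining seven by matching a few low-order Fourier coefficients, using the modular data of Lemmas \ref{vanish:lemma:2024-801a} and \ref{vanish:lemma:2024-801b} as an independent check.
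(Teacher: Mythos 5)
Your route is genuinely different from the paper's. The paper does not attempt a direct factorisation: it cites Kawasetsu \cite{Kawa1505} for part 1), and for parts 2)--4) it computes the numerators from Theorem \ref{vanish:thm:2024-704a}, passes to signed characters via $\tau\mapsto\tau+1$, and then identifies the result with the $f^{(-)}_{ijk}$'s by comparing polar parts (the coefficients of non-positive powers of $q$) and explicit $S$- and $T$-transformation data, i.e.\ by a uniqueness argument for vector-valued modular forms; the statement for $f^{(+)}_{ijk}$ is then recovered by $\tau\mapsto\tau-1$. Your preliminary steps are sound and agree with the paper's setup: the roots $\alpha_\Lambda$ you list do satisfy $(\Lambda+\rho\,|\,\delta-\alpha_\Lambda)=0$ at level $K+h^\vee=5$, the rewriting $f_{\Lambda,x,w}=F(w(\overline{\Lambda+\rho}))$ is correct, and the orthogonal frame $\{\alpha_1,\alpha_3,\alpha_4,\theta\}$ with $\overline{Q}^{\vee}=L_0\sqcup(L_0+\alpha_2)$ does split the lattice sum $F$ into a sum over the two cosets of products of four one-dimensional theta series.

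The gap is in the sentence ``After multiplying by the reduced Weyl denominator, the three $z_\ell$-sums become \dots the level-one $A^{(1)}_1$-characters \dots while the sum over $\epsilon$ of the two $\tau$-sums is a Rocha--Caridi character $\chi^{(5,3)}_{r,s}$.'' This is not a matter of recognition. The denominator \eqref{vanish:eqn:2024-803d} contains the four factors $\vartheta_{01}\big(\tau,\tfrac{\pm z_1\pm z_3\pm z_4}{2}\big)$, which do not factor along the $z_1,z_3,z_4$ directions, so the quotient of your coset-wise product by $\overset{w}{R}$ does not visibly split as $\prod_\ell {\rm ch}^{(\ell)}\cdot(\tau\hbox{-only factor})$; moreover the one-dimensional theta series produced by the $L_0$-decomposition have index governed by $K+h^{\vee}=5$, whereas the numerators of the level-one $A^{(1)}_1$ characters and of $\chi^{(5,3)}_{r,s}$ involve indices $3$ and $15$. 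Bridging this requires the nontrivial branching identities underlying the conformal embedding $L_1(\ssl_2)^{\otimes 3}\otimes{\rm Vir}(5,3)\hookrightarrow W^{-1}(D_4,e_{-\theta})$ --- which for $\Lambda=-\Lambda_0$ is precisely Kawasetsu's theorem, cited rather than reproved in the paper --- and analogous identities for the other seven modules. Your central-charge count $3-\tfrac35=\tfrac{12}{5}$ is a consistency check, not a proof of these identities. Note that your closing fallback (compute the seed case, then pin down the remaining seven by polar parts and the modular data of Lemmas \ref{vanish:lemma:2024-801a} and \ref{vanish:lemma:2024-801b}) is essentially the paper's actual argument; if you promote that from ``independent check'' to the main mechanism, and make explicit the uniqueness principle it relies on (two vector-valued forms with identical multiplier systems and identical polar parts coincide), the proof closes without the unproved factorisation.
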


\begin{proof}
1) is due to Kawasetsu's results \cite{Kawa1505}. 
For 2) $\sim$ 4), we compute the characters of $L(\Lambda)$'s
by using Theorem \ref{vanish:thm:2024-704a}.
Then, letting $\tau \rightarrow \tau+1$, we get the signed 
characters of $L(\Lambda)$'s. 
For these signed characters we compute their polar parts, 
namely the terms of $q$ with non-positive power,
and their modular transformation explicitly. 
Then, compairing these data with those for $f_{ijk}^{(-)}$'s, 
we conclude that both coincide, namely we obtain the 
relation between signed characters and $f_{ijk}^{(-)}$'s. 
Letting $\tau \rightarrow \tau-1$ in these formulas,
we obtain the relation between characters and $f_{ijk}^{(+)}$'s
as desired. 
\end{proof}

The asymptotic behavior of these characters as $\tau \downarrow 0$ 
is given by the following:

\medskip

\begin{prop} \,\ 
\label{vanish:prop:2024-801b}
\begin{enumerate}
\item[{\rm 1)}] \,\ $(i,j,k) \, = \, (000), \,\ (100), \,\ (010), \,\ (001)
\quad \Longrightarrow \quad 
f^{(+)}_{ijk}(\tau,0,0,0) \, 
\overset{\substack{\tau \, \downarrow \, 0 \\[0.3mm] }}{\sim} \, 
\dfrac{1}{\sqrt{5}} \, \sin \dfrac{\pi}{5} \, 
e^{\frac{3}{10} \cdot \frac{\pi i}{\tau}}$

\item[{\rm 2)}] \,\ $(i,j,k) \, = \, (011), \,\ (101), \,\ (110), \,\ (111)
\quad \Longrightarrow \quad 
f^{(+)}_{ijk}(\tau,0,0,0) \, 
\overset{\substack{\tau \, \downarrow \, 0 \\[0.3mm] }}{\sim} \, 
\dfrac{1}{\sqrt{5}} \, \sin \dfrac{2\pi}{5} \, 
e^{\frac{3}{10} \cdot \frac{\pi i}{\tau}}$
\end{enumerate}
\end{prop}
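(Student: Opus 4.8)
Here is my plan for proving Proposition \ref{vanish:prop:2024-801b}.

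The plan is to read off the $\tau\downarrow0$ asymptotics of $f^{(+)}_{ijk}(\tau,0,0,0)$ from the $q\to0$ expansion of its $S$-transform. Being products of normalized level-one $A^{(1)}_1$-characters and normalized $(5,3)$-Virasoro characters — each a ratio of a weight-$\tfrac12$ theta-type numerator and a weight-$\tfrac12$ eta-type denominator — the functions $f^{(+)}_{ijk}$ and $g_{ijk}$ are of modular weight zero, so the $S$-transformation rule of Lemma \ref{vanish:lemma:2024-801a} carries no automorphy prefactor, and at $z_1=z_3=z_4=0$ it reads simply $f^{(+)}_{ijk}(-\tfrac1\tau,0,0,0)=\sum_{(i'j'k')}c_{ijk,\,i'j'k'}\,g_{i'j'k'}(\tau,0,0,0)$, where the $c_{ijk,\,i'j'k'}\in\{\pm a,\pm b\}$ are the entries of $\left(\begin{smallmatrix}A&B\\-B&A\end{smallmatrix}\right)$. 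Substituting $\tau\mapsto-\tfrac1\tau$ and letting $\tau\downarrow0$ (so $-\tfrac1\tau\to i\infty$ and $q_\ast:=e^{-2\pi i/\tau}\to0$), the leading behaviour of $f^{(+)}_{ijk}(\tau,0,0,0)$ is governed by those $g_{i'j'k'}$ with nonzero coefficient whose leading power of $q$ is smallest.

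First I would determine the leading $q$-exponent of each $g_{i'j'k'}(\tau,0,0,0)$. I use the standard facts ${\rm ch}^{(\ell)}_{\Lambda_0}(\tau,0)=q^{-1/24}(1+O(q))$ and ${\rm ch}^{(\ell)}_{\Lambda_1}(\tau,0)=2\,q^{5/24}(1+O(q))$ (conformal weights $0$ and $\tfrac14$ at central charge $1$), and that $\chi^{(5,3)}_{r,s}$ has leading exponent $h^{(5,3)}_{r,s}-\tfrac1{24}z^{(5,3)}$, which — using $z^{(5,3)}=-\tfrac35$ and $h^{(5,3)}_{1,1}=0,\ h^{(5,3)}_{2,1}=\tfrac34,\ h^{(5,3)}_{2,2}=\tfrac15,\ h^{(5,3)}_{2,3}=-\tfrac1{20}$ — equals $\tfrac1{40},\ \tfrac{31}{40},\ \tfrac9{40},\ -\tfrac1{40}$ respectively. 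A short bookkeeping then gives $g_{000}\sim 7\,q^{13/20}$, $g_{100},g_{010},g_{001}\sim 2\,q^{3/20}$, $g_{011},g_{101},g_{110}\sim 2\,q^{7/20}$, and, crucially, $g_{111}(\tau,0,0,0)=q^{-3/20}\bigl(1+O(q)\bigr)$ as $\tau\to i\infty$: its leading term comes from ${\rm ch}^{(1)}_{\Lambda_0}{\rm ch}^{(3)}_{\Lambda_0}{\rm ch}^{(4)}_{\Lambda_0}\chi^{(5,3)}_{2,3}$ with coefficient $1$, and there is no cancellation because the second summand of $g_{111}$ begins only at $q^{17/20}$. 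Hence $-\tfrac3{20}$ is the \emph{unique} smallest leading exponent among the eight $g$'s, attained only by $g_{111}$, whose leading coefficient is $1$.

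Next I would read off the coefficient of $g_{111}$ in each of the eight rows of Lemma \ref{vanish:lemma:2024-801a}: since $g_{111}$ is the fifth entry of the $g$-vector, this is the fifth entry of the relevant row of $\left(\begin{smallmatrix}A&B\\-B&A\end{smallmatrix}\right)$, namely $b=\tfrac1{\sqrt5}\sin\tfrac\pi5$ for $(ijk)=(000),(100),(010),(001)$ (the top four rows) and $a=\tfrac1{\sqrt5}\sin\tfrac{2\pi}5$ for $(ijk)=(111),(011),(101),(110)$ (the bottom four rows). Since $g_{111}$ is the only $g$ contributing the exponent $-\tfrac3{20}$ while every other $g$ in the combination contributes at exponent $\ge\tfrac3{20}$, it follows that as $\tau\downarrow0$,
\[
f^{(+)}_{ijk}(\tau,0,0,0)\ \sim\ \kappa\,q_\ast^{-3/20}\ =\ \kappa\,e^{\frac3{10}\cdot\frac{\pi i}{\tau}},
\qquad
\kappa=\begin{cases}b&\text{for part 1)},\\[1mm] a&\text{for part 2)},\end{cases}
\]
which is exactly the asserted pair of formulas.

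The only step requiring genuine care is justifying that the $\tau\downarrow0$ limit may be taken termwise in the $q$-expansion of $f^{(+)}_{ijk}\big|_S$; this is routine, since each $q$-series converges uniformly on $\{|q|\le r\}$ for every $r<1$, so the subdominant contributions are smaller than the leading one by a factor $O(q_\ast^{\varepsilon})\to0$ with $\varepsilon=\tfrac3{10}>0$. Everything else is the finite exponent-bookkeeping sketched above together with the elementary verification that the eight triples $(ijk)$ fall precisely into the two groups of the statement; I do not anticipate any real obstacle beyond keeping these rational exponents straight.
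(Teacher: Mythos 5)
Your argument is correct, but it reaches the asymptotics by a genuinely different route than the paper. The paper's proof never touches the $S$-matrix of Lemma \ref{vanish:lemma:2024-801a}: it simply quotes the known $\tau\downarrow 0$ asymptotics of each constituent factor, namely ${\rm ch}_{L(\Lambda_j,A^{(1)}_1)}(\tau,0)\sim \tfrac{1}{\sqrt2}e^{\frac{\pi i}{12\tau}}$ (Theorem 4.17 in \cite{W2001}) and $\chi^{(5,3)}_{1,1},\chi^{(5,3)}_{2,1}\sim\sqrt{\tfrac25}\sin\tfrac{\pi}{5}\,e^{\frac{\pi i}{20\tau}}$, $\chi^{(5,3)}_{2,2},\chi^{(5,3)}_{2,3}\sim\sqrt{\tfrac25}\sin\tfrac{2\pi}{5}\,e^{\frac{\pi i}{20\tau}}$ (Theorem 7.1.5 in \cite{W2}), and then multiplies: each of the two summands of $f^{(+)}_{ijk}$ contributes $\tfrac{1}{2\sqrt5}\sin\tfrac{k\pi}{5}e^{\frac{3\pi i}{10\tau}}$ and they add. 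You instead treat the eight $f^{(+)}_{ijk}$ as a vector, invoke the explicit $S$-transformation of Lemma \ref{vanish:lemma:2024-801a}, and isolate $g_{111}$ as the unique member of the $g$-basis with negative leading exponent $-\tfrac{3}{20}$ and leading coefficient $1$; the claimed constants then drop out as the fifth column entries $b$ and $a$ of $\left(\begin{smallmatrix}A&B\\-B&A\end{smallmatrix}\right)$. I checked your exponent bookkeeping ($g_{000}\sim 7q^{13/20}$, $g_{100}$ etc.\ $\sim 2q^{3/20}$, $g_{011}$ etc.\ $\sim 2q^{7/20}$, $g_{111}\sim q^{-3/20}$ with the second summand starting at $8q^{17/20}$) and the column-$5$ readoff, and both are right; the two routes give identical constants. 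What your approach buys is that only the \emph{vacuum-type} leading term of a single $g$ is needed rather than four separate cited asymptotic theorems, and it automatically certifies that no cancellation can occur at leading order; what it costs is dependence on the full $S$-matrix of Lemma \ref{vanish:lemma:2024-801a} (itself derived from the very transformation properties that underlie the cited asymptotic theorems) and on the tacit convention that $\big|_S$ carries no automorphy factor at $z_1=z_3=z_4=0$ for these weight-zero combinations, which you correctly flag and justify. No gap.
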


\begin{proof}
Applying Theorem 4.17 in \cite{W2001} to $L(\Lambda_i, A^{(1)}_1)$,
we have
$$
{\rm ch}_{L(\Lambda_j, A^{(1)}_1)}(\tau,0,0)
\overset{\substack{\tau \, \downarrow \, 0 \\[0.3mm] }}{\sim} 
\dfrac{1}{\sqrt{2}} \, e^{\frac{\pi i}{12\tau}} \hspace{10mm}
(i=0,1)
$$
And, for the asymptotic behavior of the Virasoro character, 
we have 
$$ \hspace{-10mm}
\left\{
\begin{array}{lcc}
\chi^{(5,3)}_{1,1} (\tau) 
& \hspace{-2mm}
\overset{\substack{\tau \, \downarrow \, 0 \\[0.3mm] }}{\sim} 
\hspace{-2mm} & 
\sqrt{\dfrac25} \, 
\sin \dfrac{\pi}{5} \cdot e^{\frac{\pi i}{20 \tau}}
\\[4mm]
\chi^{(5,3)}_{2,1} (\tau) 
& \hspace{-2mm}
\overset{\substack{\tau \, \downarrow \, 0 \\[0.3mm] }}{\sim} 
\hspace{-2mm} & 
\sqrt{\dfrac25} \, 
\sin \dfrac{\pi}{5} \cdot e^{\frac{\pi i}{20 \tau}}
\end{array}\right. \hspace{10mm} 
\left\{
\begin{array}{lcc}
\chi^{(5,3)}_{2,2} (\tau) 
& \hspace{-2mm}
\overset{\substack{\tau \, \downarrow \, 0 \\[0.3mm] }}{\sim} 
\hspace{-2mm} & 
\sqrt{\dfrac25} \, 
\sin \dfrac{2\pi}{5} \cdot e^{\frac{\pi i}{20 \tau}}
\\[4mm]
\chi^{(5,3)}_{2,3} (\tau) 
& \hspace{-2mm}
\overset{\substack{\tau \, \downarrow \, 0 \\[0.3mm] }}{\sim} 
\hspace{-2mm} & 
\sqrt{\dfrac25} \, 
\sin \dfrac{2\pi}{5} \cdot e^{\frac{\pi i}{20 \tau}}
\end{array}\right. 
$$
by Theorem 7.1.5 in \cite{W2}. Using these and definition of 
$f_{ijk}^{(+)}$'s, we obtain the asymptotics of $f_{ijk}^{(+)}$'s
as desired.
\end{proof}

\subsection{The case $K=-2$ $\sim$ 
Denominator identity for $W(\widehat{D}_4, e_{-\theta})$}
\label{subsec:ex:D4:K=-2}

\medskip

By \eqref{vanish:eqn:2024-803a}, one has $c(K)=0$ if $K=-2$, so
one may expect there exists the trivial representation of 
the minimal W-algebra $W(\widehat{D}_4, e_{-\theta})_K$
when $K=-2$, namely one may expect to obtain the denominator identity 
from the quantum Hamiltonian reduction of the 
$\widehat{D}_4$-module $L(-2\Lambda_0)$.
The denominator of the minimal W-algebra $W(\widehat{D}_4, e_{-\theta})$
is given by 
\begin{eqnarray}
\lefteqn{ \hspace{-10mm}
\overset{w}{R}(\tau, H) \,\ = \,\ 
\frac{1}{\eta(\tau)^3} \prod_{i=1,3,4}\vartheta_{11}(\tau, z_i)
\cdot 
\vartheta_{01}\Big(\tau, \frac{z_1+z_3+z_4}{2}\Big)}
\nonumber
\\[3mm]
& \times &
\vartheta_{01}\Big(\tau, \frac{-z_1+z_3+z_4}{2}\Big) \, 
\vartheta_{01}\Big(\tau, \frac{z_1-z_3+z_4}{2}\Big) \, 
\vartheta_{01}\Big(\tau, \frac{z_1+z_3-z_4}{2}\Big)
\label{vanish:eqn:2024-803d}
\end{eqnarray}
where \,\ 
$H = \sum_{i=1,3,4} z_i \, \frac{\alpha_i}{2} \, \in \, \overline{\hhh}^f$.
Then, working out this calculation, we obtain the following:

\medskip



\begin{prop} \,\
\label{prop:Vol.353p.64a}
{\rm (Denominator identity of $W(\widehat{D}_4, e_{-\theta})$)}
\begin{subequations}
\begin{enumerate}
\item[{\rm 1)}] \,\ 

\vspace{-14.5mm}

{\allowdisplaybreaks
\begin{eqnarray}
& & \hspace{-25mm}
\frac{i}{\eta(\tau)^3}
\Big[\prod_{j=1,3,4}\vartheta_{11}(\tau, z_j)\Big] \, 
\vartheta_{01}\Big(\tau, \frac{z_1+z_3+z_4}{2}\Big)
\vartheta_{01}\Big(\tau, \frac{-z_1+z_3+z_4}{2}\Big)
\nonumber
\\[1mm]
& & \hspace{11mm}
\times \,\ 
\vartheta_{01}\Big(\tau, \frac{z_1-z_3+z_4}{2}\Big)
\vartheta_{01}\Big(\tau, \frac{z_1+z_3-z_4}{2}\Big)
\nonumber
\\[4.5mm]
= & &
\sum_{\substack{(m_0,m_1,m_3,m_4) \\[0mm]
\rotatebox{-90}{$\in$} \\[0mm]
\{(4222), \, (1111), \, (1133), \, (3131), \, (3113)\}
}} \hspace{-15mm}
\theta_{m_0,4}(\tau,0) 
\prod_{j=1,3,4}\big[
\theta_{m_j,4}-\theta_{-m_j,4}\big](\tau, z_j)
\nonumber
\\[2mm]
&-&
\sum_{\substack{(m_0,m_1,m_3,m_4) \\[0mm]
\rotatebox{-90}{$\in$} \\[0mm]
\{(0222), \, (3333), \, (3311), \, (1313), \, (1331)\}
}} \hspace{-15mm}
\theta_{m_0,4}(\tau,0) 
\prod_{j=1,3,4}\big[
\theta_{m_j,4}-\theta_{-m_j,4}\big](\tau, z_j)
\label{eqn:2024-518a1}
\end{eqnarray}}
\item[{\rm 2)}] \,\ 

\vspace{-14.5mm}

{\allowdisplaybreaks
\begin{eqnarray}
& & \hspace{-14mm}
\frac{i}{\eta(\tau)^3}
\Big[\prod_{j=1,3,4}\vartheta_{11}(\tau, z_j)\Big] \, 
\vartheta_{00}\Big(\tau, \frac{z_1+z_3+z_4}{2}\Big)
\vartheta_{00}\Big(\tau, \frac{-z_1+z_3+z_4}{2}\Big)
\nonumber
\\[1mm]
& & \hspace{21mm}
\times \,\ 
\vartheta_{00}\Big(\tau, \frac{z_1-z_3+z_4}{2}\Big)
\vartheta_{00}\Big(\tau, \frac{z_1+z_3-z_4}{2}\Big)
\nonumber
\\[4.5mm]
= &- &
\sum_{\substack{(m_0,m_1,m_3,m_4) \\[0mm]
\rotatebox{-90}{$\in$} \\[0mm]
\{(4222), \, (1111), \, (1133), \, (3131), \, (3113)\}
}} \hspace{-15mm}
(-1)^{m_1} \, \theta_{m_0,4}(\tau,0) 
\prod_{j=1,3,4}\big[
\theta_{m_j,4}-\theta_{-m_j,4}\big](\tau, z_j) \hspace{10mm}
\nonumber
\\[2mm]
&+&
\sum_{\substack{(m_0,m_1,m_3,m_4) \\[0mm]
\rotatebox{-90}{$\in$} \\[0mm]
\{(0222), \, (3333), \, (3311), \, (1313), \, (1331)\}
}} \hspace{-15mm}
(-1)^{m_1} \, \theta_{m_0,4}(\tau,0) 
\prod_{j=1,3,4}\big[
\theta_{m_j,4}-\theta_{-m_j,4}\big](\tau, z_j)
\label{eqn:2024-518a2}
\end{eqnarray}}
\item[{\rm 3)}] \,\ 

\vspace{-14.5mm}

{\allowdisplaybreaks
\begin{eqnarray}
& & \hspace{-23mm}
\frac{i}{\eta(\tau)^3}
\Big[\prod_{j=1,3,4}\vartheta_{11}(\tau, z_j)\Big] \, 
\vartheta_{11}\Big(\tau, \frac{z_1+z_3+z_4}{2}\Big)
\vartheta_{11}\Big(\tau, \frac{-z_1+z_3+z_4}{2}\Big)
\nonumber
\\[1mm]
& & \hspace{12.5mm}
\times \,\ 
\vartheta_{11}\Big(\tau, \frac{z_1-z_3+z_4}{2}\Big)
\vartheta_{11}\Big(\tau, \frac{z_1+z_3-z_4}{2}\Big)
\nonumber
\\[2mm]
&=&
- \hspace{-26mm}
\sum_{\substack{(m_0,m_1,m_3,m_4) \\[0mm]
\rotatebox{-90}{$\in$} \\[0mm] \hspace{26mm}
\{(4222), \, (1333), \, (1311), \, (3313), \, (3331)\}
}} \hspace{-30mm}
\theta_{m_0,4}(\tau,0) 
\prod_{j=1,3,4}\big[
\theta_{m_j,4}-\theta_{-m_j,4}\big](\tau, z_j)
\nonumber
\\[2mm]
& & 
+ \hspace{-26mm}
\sum_{\substack{(m_0,m_1,m_3,m_4) \\[0mm]
\rotatebox{-90}{$\in$} \\[0mm] \hspace{26mm}
\{(0222), \, (3111), \, (3133), \, (1131), \, (1113)\}
}} \hspace{-30mm}
\theta_{m_0,4}(\tau,0) 
\prod_{j=1,3,4}\big[
\theta_{m_j,4}-\theta_{-m_j,4}\big](\tau, z_j)
\label{eqn:2024-518a3}
\end{eqnarray}}
\item[{\rm 4)}] \,\ 

\vspace{-14.5mm}

{\allowdisplaybreaks
\begin{eqnarray}
& & \hspace{-23mm}
\frac{i}{\eta(\tau)^3}
\Big[\prod_{j=1,3,4}\vartheta_{11}(\tau, z_j)\Big] \, 
\vartheta_{10}\Big(\tau, \frac{z_1+z_3+z_4}{2}\Big)
\vartheta_{10}\Big(\tau, \frac{-z_1+z_3+z_4}{2}\Big)
\nonumber
\\[1mm]
& & \hspace{12.5mm}
\times \,\ 
\vartheta_{10}\Big(\tau, \frac{z_1-z_3+z_4}{2}\Big)
\vartheta_{10}\Big(\tau, \frac{z_1+z_3-z_4}{2}\Big)
\nonumber
\\[2mm]
&=&
- \hspace{-26mm}
\sum_{\substack{(m_0,m_1,m_3,m_4) \\[0mm]
\rotatebox{-90}{$\in$} \\[0mm] \hspace{26mm}
\{(4222), \, (1333), \, (1311), \, (3313), \, (3331)\}
}} \hspace{-30mm}
(-1)^{m_1} \, \theta_{m_0,4}(\tau,0) 
\prod_{j=1,3,4}\big[
\theta_{m_j,4}-\theta_{-m_j,4}\big](\tau, z_j)
\nonumber
\\[2mm]
& & 
+ \hspace{-26mm}
\sum_{\substack{(m_0,m_1,m_3,m_4) \\[0mm]
\rotatebox{-90}{$\in$} \\[0mm] \hspace{26mm}
\{(0222), \, (3111), \, (3133), \, (1131), \, (1113)\}
}} \hspace{-30mm}
(-1)^{m_1} \, \theta_{m_0,4}(\tau,0) 
\prod_{j=1,3,4}\big[
\theta_{m_j,4}-\theta_{-m_j,4}\big](\tau, z_j)
\label{eqn:2024-518a4}
\end{eqnarray}}
\end{enumerate}
\end{subequations}
\end{prop}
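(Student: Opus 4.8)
The four identities are quantum Hamiltonian reductions, via Theorem~\ref{vanish:thm:2024-704a}, of the $\widehat{D}_4$-module $L(-2\Lambda_0)$ at level $K=-2$ with respect to the minimal $sl_2$-triple $(x,e,f)=(\tfrac12\theta,\tfrac12 e_{\theta},e_{-\theta})$, for which $K+h^{\vee}=4$ and $\overline{\hhh}^f$ is as in \eqref{vanish:eqn:2024-803c}. For $\lambda=-2\Lambda_0$ one has $\lambda+\rho=4\Lambda_0+\overline{\rho}$, and \eqref{vanish:eqn:2024-703b} holds with $\alpha=\alpha_1+\alpha_2+\alpha_3+\alpha_4$ (the unique positive root of $D_4$ of height $4$, so $(\overline{\rho}\,|\,\alpha)=4$); take $\beta=\theta$, so $(\beta\,|\,x)=1$. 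Substituting into \eqref{vanish:eqn:2024-703e1}--\eqref{vanish:eqn:2024-703e3} gives the first of the four functions, and substituting into the companions \eqref{vanish:eqn:2024-728a1}--\eqref{vanish:eqn:2024-728b2}, together with the translations $z_i\mapsto z_i+1$ in $\overline{\hhh}^f$, gives the other three: the four theta functions $\vartheta_{01},\vartheta_{00},\vartheta_{11},\vartheta_{10}$ in the ``spinor'' slots of \eqref{eqn:2024-518a1}--\eqref{eqn:2024-518a4} are exactly the four functions obtained from $\vartheta_{01}$ by the half-period shift $z\mapsto z+\tfrac12$ (which toggles $\vartheta_{01}\leftrightarrow\vartheta_{00}$ and $\vartheta_{10}\leftrightarrow\vartheta_{11}$) and by the $\tfrac{\tau}{2}$-shift encoded in \eqref{vanish:eqn:2024-728b2} (which toggles the integer and half-integer families); correspondingly the sign $(-1)^{m_1}$ in \eqref{eqn:2024-518a2} and \eqref{eqn:2024-518a4} is the anti-periodicity of $\vartheta_{11}(\tau,z_1)$ under $z_1\mapsto z_1+1$, i.e.\ the shift $H\mapsto H+x$ in \eqref{vanish:eqn:2024-728b1}.

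The first substantive step is to evaluate the right-hand sides. Here $\overline{Q}^{\vee}=Q(D_4)$ ($D_4$ being simply laced), $\overline{\hhh}^f=\ccc\tfrac{\alpha_1}{2}+\ccc\tfrac{\alpha_3}{2}+\ccc\tfrac{\alpha_4}{2}$, and the complementary direction is $\ccc\theta$. Writing $\gamma\in Q(D_4)$ in terms of $\alpha_1^{\vee},\alpha_3^{\vee},\alpha_4^{\vee}$ and $\theta^{\vee}$ and splitting $\overline{\lambda+\rho}+4\gamma-4w^{-1}x$ into its $\overline{\hhh}^f$- and $\ccc\theta$-parts, the lattice sum \eqref{vanish:eqn:2024-703e3} factors into one theta function in the $\theta$-direction --- evaluated at argument $0$ since $H\perp\theta$, producing the $\theta_{m_0,4}(\tau,0)$ --- times three theta functions in $z_1,z_3,z_4$. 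Applying on top of this the Weyl sum $\tfrac14\sum_{w\in\overline{W}}\varepsilon(w)(\beta^{\vee}\,|\,w\alpha)(\cdots)$ with $|\overline{W}|=|W(D_4)|=192$, the standard manoeuvre of absorbing $\overline{W}\ltimes 4\overline{Q}^{\vee}$ into a single lattice sum collapses everything to the two five-term sums on the right of each identity; the two five-element index sets $\{(4222),(1111),(1133),(3131),(3113)\}$ etc.\ are a set of $\overline{W}$-orbit representatives in $Q(D_4)/4Q(D_4)$ carrying the correct $(\zzz/2)^2$-charge, and the antisymmetrizations $\theta_{m_j,4}-\theta_{-m_j,4}$ come from pairing $w$ with $r_{\alpha_j}w$ (whence only $m_j\in\{1,2,3\}$ occur for $j=1,3,4$).

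The second substantive step is to identify these four functions with the products on the left. The self-contained route is direct factorization: the Jacobi triple product turns $\frac{1}{\eta(\tau)^3}\prod_{i=1,3,4}\vartheta_{11}(\tau,z_i)$ into the Weyl--Kac denominator of the affinization of the centraliser $\overline{\ggg}^{\natural}\cong\ssl_2^{\oplus 3}$, turns the four spinor factors into the contribution of $\overline{\ggg}_{1/2}$, and the resulting product then re-expands, term by term, into the alternating lattice sum of the previous paragraph. Representation-theoretically this is forced: $c(-2)=0$ by \eqref{vanish:eqn:2024-803a}, \eqref{vanish:eqn:2024-803d} is the denominator $\overset{w}{R}(\tau,H)$ of $W(\widehat{D}_4,e_{-\theta})_{-2}$, and the four left-hand products are the numerators of the one-dimensional modules attached to the four characters of the centre $(\zzz/2)^2$ of $D_4$ --- so the identities in particular confirm that $H(-2\Lambda_0)$ is the trivial vertex algebra. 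The $S$- and $T$-matrices of Lemmas~\ref{vanish:lemma:2024-801a}--\ref{vanish:lemma:2024-801b} furnish an independent check, since $S$ permutes the four left-hand products among themselves while carrying the right-hand index sets to one another by the same permutation.

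The main obstacle is the bookkeeping in the second paragraph: organizing the sum over $W(D_4)$, with its signs $\varepsilon(w)$ and the linear factor $(\beta^{\vee}\,|\,w\alpha)$, together with the half-period and $\tfrac{\tau}{2}$-shifts that separate the four cases, so that exactly the displayed five-term index sets survive, each with the correct sign and with the $(-1)^{m_1}$ prefactor in parts 2) and 4). Once this combinatorial reduction is in hand, the remaining identifications are either a direct substitution into Theorem~\ref{vanish:thm:2024-704a} or a routine Jacobi-triple-product factorization.
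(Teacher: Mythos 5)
Your setup matches the paper's: the same data $\Lambda=-2\Lambda_0$, $\alpha=\alpha_1+\alpha_2+\alpha_3+\alpha_4$, $\beta=\theta$ substituted into Theorem \ref{vanish:thm:2024-704a}, the same factorization of the lattice sum along $\overline{\hhh}^f\oplus\ccc\theta$ to produce $\theta_{m_0,4}(\tau,0)\prod_j[\theta_{m_j,4}-\theta_{-m_j,4}](\tau,z_j)$, and the same derivation of parts 2)--4) from part 1) by the elliptic shifts $z_i\mapsto z_i+1$ and $z_i\mapsto z_i+\tau$ (the paper does exactly this, rather than routing through $A^{(-)}$ and $A^{(\ast)}$, but that is cosmetic). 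The gap is in the one step that carries the whole weight: showing that the ten-term alternating sum so obtained equals the theta-product on the left of \eqref{eqn:2024-518a1}. Your ``self-contained route'' asserts that the Jacobi triple product makes the left-hand side ``re-expand, term by term, into the alternating lattice sum,'' but no mechanism for that re-expansion is given --- multiplying seven rank-one theta series and recollecting them into precisely these two five-element index sets in $(\zzz/8)^4$ \emph{is} the identity to be proved, not a routine consequence of the triple product. Your fallback, that the identity is ``forced'' representation-theoretically because $c(-2)=0$, assumes exactly what the paper states it cannot assume: it is not known whether $\Lambda=-2\Lambda_0$ satisfies condition (iv) of Theorem 4.1 in the reference, i.e.\ whether $A^{[\alpha]}_{\Lambda+\rho}$ is really the numerator of the character of $H(-2\Lambda_0)$ and whether that reduction is the trivial module. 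The paper is explicit that at this stage \eqref{eqn:2024-518a1} is only a ``candidate.''

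The paper closes this gap by a finite analytic verification you do not supply: multiply both sides of \eqref{eqn:2024-518a1} by $1/\eta(\tau)^4$, compute the polar parts (the $q$-expansion terms of non-positive power) and the full modular transformation behavior of each side, and observe that they coincide, which pins down both sides inside the relevant finite-dimensional space of Jacobi-type forms. Your last paragraph gestures at a modular check but attaches it to Lemmas \ref{vanish:lemma:2024-801a} and \ref{vanish:lemma:2024-801b}, which concern the $K=-1$ functions $f^{(\pm)}_{ijk}$ and $g_{ijk}$, not the $K=-2$ denominator; as written it cannot substitute for the comparison of polar parts that actually completes the argument. To repair the proposal, either carry out that polar-part-plus-modularity comparison for \eqref{eqn:2024-518a1}, or genuinely execute the term-by-term product expansion you claim is routine.
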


\begin{proof} We consider $\Lambda=-2\Lambda_0$, which satisfies 
$(\Lambda+\rho|\delta-\alpha)=0$ where $\alpha=
\sum_{i=1}^4\alpha_i$. Then, after very long calculations 
by letting $\Lambda:=-2\Lambda_0$ and $\beta:=\theta$
in \eqref{vanish:eqn:2024-703e1} and \eqref{vanish:eqn:2024-703e2},
we obtain the formula \eqref{eqn:2024-518a1}. 
But we do not know whether $\Lambda=-2\Lambda_0$ satisfies the 
condition (iv) in Theorem 4.1 in \cite{KW2017d}, 
namely whether the RHS of \eqref{eqn:2024-518a1} gives the 
right numerator of the quantum reduction of $L(-2\Lambda_0)$.
So, at this stage, the formula \eqref{eqn:2024-518a1} is a 
\lq \lq candidate" of the denominator identity.
In order to confirm that \eqref{eqn:2024-518a1} is a right formula, 
we multiply $1/\eta(\tau)^4$ to both sides of \eqref{eqn:2024-518a1}
and compute and compare their polar parts and modular transformations.
Then we see that they just coincide and the formula \eqref{eqn:2024-518a1}
is established.  2) follows from 1) by letting $z_i \rightarrow z_i+1$, and 
3) follows from 1) by letting $z_i \rightarrow z_i+\tau$, and 
4) follows from 2) by letting $z_i \rightarrow z_i+\tau$.
\end{proof}


\begin{thebibliography}{99}

\bibitem{K1} V. G. Kac : Infinite-Dimensional Lie Algebras, 3rd edition,
Cambridge University Press, 1990.

\bibitem{KW4} V. G. Kac and M. Wakimoto : 
Modular and conformal invariance constraints in 
representation theory of affine algebras, Advances in Math. 70 
(1988), 156-236.

\bibitem{KW2017d} V. G. Kac and M. Wakimoto : On characters of irreducible 
highest weight modules of negative integer level over affine Lie algebras,
in Progress of Mathematics 326 \lq \lq Lie Groups, Geometry, and 
Representation Theory", pp.235-252, Birkh\"{a}user, 2018.

\bibitem{KW2024} V. G. Kac and M. Wakimoto : 
On modular invariance of quantum affine W-algebras, in preparation.

\bibitem{Kawa1505} Kazuya Kawasetsu : W-algebras with 
non-admissible levels and the Deligne exceptional series,
International Math. Research Notices, Vol.2018 No.3, pp.641-676, 2018.

\bibitem{Mum} D. Mumford : Tata Lectures on Theta I, Progress in Math. 28, 
Birkh\"{a}user Boston, 1983.

\bibitem{W2001} M. Wakimoto : Infinite-Dimensional Lie Algebras, 
Translation of Mathematical Monographs Vol.195, American Mathematical 
Society 2001.

\bibitem{W2} M. Wakimoto : Lectures on Infinite-Dimensional Lie Algebra, 
World Scientific, 2001.


\end{thebibliography}
\end{document}